\documentclass{article}

\usepackage{amssymb}
\usepackage{amsthm}
\usepackage{amsmath}
\usepackage{mathtools}
\usepackage{enumitem}
\usepackage[capitalize]{cleveref}
\usepackage[all]{xy}
\usepackage{tikz-cd}

\numberwithin{equation}{section}

\theoremstyle{definition}
\newtheorem{definition}[equation]{Definition}
\theoremstyle{plain}

\newtheorem{proposition}[equation]{Proposition}
\newtheorem{theorem}[equation]{Theorem}
\newtheorem{corollary}[equation]{Corollary}
\newtheorem{claim}[equation]{Claim}
\theoremstyle{remark}
\newtheorem{remark}[equation]{Remark}

\crefname{equation}{}{}
\crefname{enumi}{}{}

\newcommand{\todo}[1]{}

\newcommand{\abs}[1]{\lvert{#1}\rvert}
\newcommand{\id}{{\rm id}}
\newcommand{\pr}{\mathrm{pr}}
\newcommand{\ev}{\mathrm{ev}}
\newcommand{\res}{\mathrm{res}}
\renewcommand{\deg}[1]{\mathopen|#1\mathclose|}
\newcommand{\K}{\mathbb K}
\newcommand{\Q}{\mathbb Q}
\newcommand{\Z}{\mathbb Z}
\newcommand{\C}{\mathbb C}
\newcommand{\cochain}[2][*]{C^{#1}(#2)}
\newcommand{\cohom}[2][*]{H^{#1}(#2)}
\newcommand{\homol}[2][*]{H_{#1}(#2)}
\newcommand{\homolshift}[2][*]{{\mathbb H}_{#1}(#2)}
\newcommand{\im}{\operatorname{Im}}

\renewcommand{\hom}{\mathrm{Hom}}
\newcommand{\ext}{\mathrm{Ext}}
\newcommand{\tor}{\mathrm{Tor}}
\renewcommand{\lim}{{\varprojlim}}
\newcommand{\limone}{{\varprojlim}^1}

\newcommand{\shriekhomol}[1]{{#1}^!}
\newcommand{\shriek}[1]{{#1}_!}
\newcommand{\incl}{\mathrm{incl}}
\newcommand{\comp}{\mathrm{comp}}
\newcommand{\diag}{\Delta}
\newcommand{\inclconst}{c}
\newcommand{\shriekinclconst}{\shriek\gamma}
\newcommand{\shriekdiag}{\shriek\delta}
\newcommand{\qis}{\simeq}
\newcommand{\tpow}[1]{^{\otimes #1}}
\newcommand{\susp}[2][]{s^{#1}#2}
\newcommand{\EM}{\operatorname{EM}}

\newcommand{\bprod}{\mu}
\newcommand{\bcop}{\delta}
\newcommand{\dprod}{\mu^\vee}
\newcommand{\dcop}{\delta^\vee}
\DeclareMathOperator{\map}{Map}
\newcommand{\loopsp}[1]{L#1}
\newcommand{\spheresp}[2][]{S^{#1}#2}

\newcommand{\orirev}{\tau}

\title{Coproducts in brane topology}
\author{Shun Wakatsuki}
\date{}

\begin{document}
\maketitle
\begin{abstract}
  We extend the loop product and the loop coproduct
  to the mapping space from the $k$-dimensional sphere, or more generally from any $k$-manifold,
  to a $k$-connected space with finite dimensional rational homotopy group, $k\geq 1$.
  The key to extending the loop coproduct is the fact that
  the embedding $M\rightarrow M^{S^{k-1}}$ is of ``finite codimension''
  in a sense of Gorenstein spaces.
  Moreover, we prove the associativity, commutativity, and Frobenius compatibility of them.
\end{abstract}

\section{Introduction}
Chas and Sullivan \cite{chas-sullivan} introduced the loop product
on the homology $\homol{\loopsp{M}}$ of the free loop space $\loopsp{M}=\map(S^1, M)$ of a manifold.
Cohen and Godin \cite{cohen-godin} extended this product to other string operations,
including the loop coproduct.

Generalizing these constructions,
F\'elix and Thomas \cite{felix-thomas09} defined the loop product and coproduct
in the case $M$ is a Gorenstein space.
A Gorenstein space is a generalization of a manifold in the point of view of Poincar\'e duality,
including the classifying space of a connected Lie group
and the Borel construction of a connected oriented closed manifold and a connected Lie group.
But these operations tend to be trivial in many cases.
Let $\K$ be a field of characteristic zero.
For example,
the loop coproduct is trivial for a manifold with the Euler characteristic zero \cite[Corollary 3.2]{tamanoi},
the composition of the loop coproduct followed by the loop product is trivial for any manifold \cite[Theorem A]{tamanoi},
and the loop product over $\K$ is trivial for the classifying space of a connected Lie group \cite[Theorem 14]{felix-thomas09}.
A space with the nontrivial composition of loop coproduct and product is not found.

On the other hand,
Sullivan and Voronov
\todo{article([SV05] in \cite{cohen-hess-voronov}) not found}%
generalized the loop product to the sphere space $\spheresp[k]{M}=\map(S^k, M)$\todo{Voronov's notation} for $k\geq 1$.
This product is called the \textit{brane product}.
See \cite[Part I, Chapter 5]{cohen-hess-voronov}.

In this article,
we will generalize the loop coproduct to sphere spaces,
to construct nontrivial and interesting operations.
We call this coproduct the \textit{brane coproduct}.

Here, we review briefly the construction of the loop product and the brane product.
For simplicity, we assume $M$ is a connected oriented closed manifold of dimension $m$.
The loop product is constructed as a mixture of
the Pontrjagin product $\homol{\Omega M \times \Omega M} \rightarrow \homol{\Omega M}$
defined by the composition of based loops
and the intersection product $\homol{M\times M} \rightarrow \homol[*-m]{M}$.
More precisely, we use the following diagram
\begin{equation}
  \label{equation:loopProdDiagram}
  \xymatrix{
    \loopsp{M}\times\loopsp{M} \ar_{\ev_1\times\ev_1}[d] & \loopsp{M}\times_M\loopsp{M} \ar_-\incl[l]\ar[d]\ar^-\comp[r] & \loopsp{M}\\
    M\times M & M. \ar_-\diag[l]
  }
\end{equation}
Here,
the square is a pullback diagram by
the diagonal map $\diag$ and
the evaluation map $\ev_1$ at $1$, identifying $S^1$ with the unit circle $\{z\in \C\mid \abs{z}=1\}$,
and $\comp$ is the map defined by the composition of loops.
Since the diagonal map $\diag\colon M\rightarrow M\times M$ is an embedding of finite codimension,
we have the shriek map $\shriekhomol{\diag}\colon \homol{M\times M}\rightarrow\homol[*-m]{M}$,
which is called the intersection product.
Using the pullback diagram, we can ``lift'' $\shriekhomol{\diag}$ to
$\shriekhomol{\incl}\colon \homol{\loopsp{M}\times \loopsp{M}}\rightarrow\homol[*-m]{\loopsp{M}\times_M \loopsp{M}}$.
Then, we define the loop product to be the composition
$\comp_*\circ\shriekhomol{\incl}\colon\homol{\loopsp{M}\times \loopsp{M}}\rightarrow\homol[*-m]{\loopsp{M}}.$

The brane product can be defined by a similar way.
Let $k$ be a positive integer.
We use the diagram
\begin{equation*}
  \xymatrix{
    \spheresp[k]{M}\times\spheresp[k]{M} \ar[d] & \spheresp[k]{M}\times_M\spheresp[k]{M} \ar_\incl[l]\ar[d]\ar^-\comp[r] & \spheresp[k]{M}\\
    M\times M & M. \ar_-\diag[l]
  }
\end{equation*}
Since the base map of the pullback diagram is the diagonal map $\diag$,
which is the same as that for the loop product,
we can use the same method to define the shriek map
$\shriekhomol{\incl}\colon \homol{\spheresp[k]{M}\times \spheresp[k]{M}}\rightarrow\homol[*-m]{\spheresp[k]{M}\times_M \spheresp[k]{M}}$.
Hence we define the brane product to be the composition
$\comp_*\circ\shriekhomol{\incl}\colon\homol{\spheresp[k]{M}\times \spheresp[k]{M}}\rightarrow\homol[*-m]{\spheresp[k]{M}}.$

Next, we review the loop coproduct.
Using the diagram
\begin{equation}
  \label{equation:loopCopDiagram}
  \xymatrix{
    \loopsp{M} \ar_{\ev_1\times\ev_{-1}}[d] & \loopsp{M}\times_M\loopsp{M} \ar_-\comp[l]\ar[d]\ar^-\incl[r] & \loopsp{M}\times\loopsp{M}\\
    M\times M & M, \ar_-\diag[l]
  }
\end{equation}
we define the loop coproduct to be the composition
$\incl_*\circ\shriekhomol{\comp}\colon \homol{\loopsp{M}}\rightarrow\homol[*-m]{\loopsp{M}\times\loopsp{M}}$.

But the brane coproduct cannot be defined in this way.
To construct the brane coproduct,
we have to use the diagram
\begin{equation*}
  \xymatrix{
    \spheresp[k]{M} \ar_\res[d] & \spheresp[k]{M}\times_M\spheresp[k]{M} \ar_-\comp[l]\ar[d]\ar^-\incl[r] & \spheresp[k]{M}\times\spheresp[k]{M}\\
    \spheresp[k-1]{M} & M. \ar_-\inclconst[l]
  }
\end{equation*}
Here, $\inclconst\colon M\rightarrow\spheresp[k-1]{M}$ is the embedding by constant maps
and $\res\colon\spheresp[k]{M}\rightarrow\spheresp[k-1]{M}$ is the restriction map to $S^{k-1}$,
which is embedded to $S^k$ as the equator.
In a usual sense, the base map $\inclconst$ is not an embedding of finite codimension.
But using the algebraic method of F\'elix and Thomas \cite{felix-thomas09},
we can consider this map as an embedding of codimension $\bar{m}=\dim\Omega M$,
which is the dimension as a $\K$-Gorenstein space and is finite when $\pi_*(M)\otimes\K$ is of finite dimension.
Hence, under this assumption, we have the shriek map
$\shriekhomol{\inclconst}\colon\homol{\spheresp[k-1]{M}}\rightarrow\homol[*-\bar{m}]{M}$
and the lift
$\shriekhomol{\comp}\colon\homol{\spheresp[k]{M}}\rightarrow\homol[*-\bar{m}]{\spheresp[k]{M}\times_M\spheresp[k]{M}}$.
This enables us to define the brane coproduct to be the composition
$\incl_*\circ\shriekhomol{\comp}\homol{\spheresp[k]{M}}\rightarrow\homol[*-\bar{m}]{\spheresp[k]{M}\times\spheresp[k]{M}}$.

More generally,
using connected sums,
we define the product and coproduct for mapping spaces from manifolds.
Let $S$ and $T$ be manifolds of dimension $k$.
Let $M$ be a $k$-connected $\K$-Gorenstein space of finite type.\todo{$k$-conn?}
Denote $m = \dim M$.
Then we define the \textit{$(S,T)$-brane product}
\begin{equation*}
  \bprod_{ST}\colon \homol{M^S\times M^T} \rightarrow \homol[*-m]{M^{S\#T}}
\end{equation*}
using the diagram
\begin{equation}
  \label{equation:STProdDiagram}
  \xymatrix{
    M^S\times M^T \ar[d] & M^S\times_MM^T \ar_-\incl[l]\ar[d]\ar^-\comp[r] & M^{S\#T}\\
    M\times M & M. \ar_-\diag[l]
  }
\end{equation}
Assume that $M$ is $k$-connected and
$\pi_*(M)\otimes\K = \bigoplus_n\pi_n(M)\otimes\K$ is of finite dimension.
Then the iterated based loop space $\Omega^{k-1} M$ is a Gorenstein space,
and denote $\bar{m} = \dim\Omega^{k-1} M$.
Then we define the \textit{$(S,T)$-brane coproduct}
\begin{equation*}
  \bcop_{ST}\colon \homol{M^{S\#T}} \rightarrow \homol[*-\bar{m}]{M^S\times M^T}
\end{equation*}
using the diagram
\begin{equation}
  \label{equation:STCopDiagram}
  \xymatrix{
    M^{S\#T}\ar[d] & M^S\times_MM^T \ar_-\comp[l]\ar[d]\ar^-\incl[r] & M^S\times M^T\\
    \spheresp[k-1]{M} & M. \ar_-\inclconst[l]
  }
\end{equation}
Note that, if we take $S=T=S^{k}$,
then $\bprod_{ST}$ and $\bcop_{ST}$ are the brane product and coproduct, respectively.

Next, we study some fundamental properties of the brane product and coproduct.
For the loop product and coproduct on Gorenstein spaces,
Naito \cite{naito13} showed their associativity and the Frobenius compatibility.
In this article,
we generalize them to the case of the brane product and coproduct.
Moreover, we show the commutativity of the brane product and coproduct,
which was not known even for the case of the loop product and coproduct on Gorenstein spaces.

\begin{theorem}
  \label{theorem:associativeFrobenius}
  Let $M$ be a $k$-connected space with $\dim\pi_*(M)\otimes\K < \infty$.
  Then the above product and coproduct satisfy following properties.
  \begin{enumerate}[label={\rm(\arabic*)}]
    \item \label{item:assocProd} The product is associative and commutative.
    \item \label{item:assocCop} The coproduct is associative and commutative.
    \item \label{item:Frob} The product and coproduct satisfy the Frobenius compatibility.
  \end{enumerate}
  In particular,
  if we take $S=T=S^k$,
  the shifted homology
  $\homolshift{\spheresp[k]{M}} = \homol[*+m]{\spheresp[k]{M}}$
  is a non-unital and non-counital Frobenius algebra,
  where $m$ is the dimension of $M$ as a Gorenstein space.
\end{theorem}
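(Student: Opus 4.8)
The plan is to reduce all three assertions to two structural properties of the Gorenstein shriek map that I would establish beforehand: its functoriality under composition of finite-codimension maps, $\shriekhomol{(g\circ f)}=\shriekhomol{f}\circ\shriekhomol{g}$, and its compatibility with base change, namely that a shriek map along the base leg of a pullback square lifts uniquely and naturally to a shriek map along the top edge. Granting these, each identity in the theorem becomes a purely diagrammatic statement, and the only geometric input is the behaviour of the connected sum: that $\#$ is associative and commutative up to orientation-preserving diffeomorphism, $(S\#T)\#U\cong S\#(T\#U)$ and $S\#T\cong T\#S$, and that the composition and restriction maps $\comp$ and $\res$ are compatible with these identifications.

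For the associativity of the product in \ref{item:assocProd}, I would form the triple pullback of $M^S\times M^T\times M^U$ along the iterated diagonal and compare the two bracketings of $\bprod$. Each bracketing is a composite of two applications of $\comp_*\circ\shriekhomol{\incl}$; using the composition property of shriek maps together with base change, each collapses to a single shriek map along the triple diagonal $\diag_3\colon M\to M\times M\times M$ followed by the push-forward along the iterated composition $M^S\times_M M^T\times_M M^U\to M^{S\#T\#U}$. Because $(\diag\times\id)\circ\diag=\diag_3=(\id\times\diag)\circ\diag$ is symmetric and connected sum is associative, the two composites share the same source, target, and defining diagram, hence agree. Commutativity follows from the same diagram together with the swap automorphism of $M^S\times M^T$, the invariance of $\diag$ under transposition of the two factors, and the diffeomorphism $S\#T\cong T\#S$; here I expect a mild orientation check to be the only delicate point.

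For the coproduct in \ref{item:assocCop} I would run the dual argument, replacing the diagonal $\diag$ by the constant inclusion $\inclconst\colon M\to\spheresp[k-1]{M}$ as the base leg of the relevant pullback squares. Associativity again reduces, via the composition property of shriek maps, to the associativity of $\#$ and to a coassociativity diagram relating the restriction maps $\res$ to the iterated composition, while commutativity reduces to $S\#T\cong T\#S$. The genuinely new difficulty compared with the product is that $\inclconst$ is not an embedding of finite codimension in the ordinary sense, so the composition and base-change properties invoked here must be verified for the Gorenstein shriek map of $\inclconst$, whose ``codimension'' $\bar m=\dim\Omega^{k-1}M$ is defined only through the duality of Gorenstein spaces; checking that these lemmas apply verbatim to this leg is where I expect most of the technical work to lie.

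The Frobenius compatibility in \ref{item:Frob} is the main obstacle, because it mixes the two kinds of shriek map. The strategy is to build a single pullback diagram combining one diagonal leg and one constant-inclusion leg, so that its associated shriek map simultaneously computes $\bcop\circ\bprod$ and each of the two half-twisted composites $(\bprod\otimes\id)\circ(\id\otimes\bcop)$ and $(\id\otimes\bprod)\circ(\bcop\otimes\id)$. The key lemma to prove is that, inside this combined diagram, forming $\shriekhomol{\diag}$ and $\shriekhomol{\inclconst}$ along the two independent legs in either order yields the same map; this is a Künneth-type base-change statement for Gorenstein shriek maps and is the heart of the argument. Finally, specializing to $S=T=S^k$, the diffeomorphism $S^k\#S^k\cong S^k$ makes $\bprod$ and $\bcop$ into a product and coproduct on the single space $\homolshift{\spheresp[k]{M}}$, and the associativity, commutativity, and Frobenius relation just established are precisely the axioms of a non-unital, non-counital Frobenius algebra on the shifted homology, which yields the final statement.
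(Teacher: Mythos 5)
Your overall architecture is the paper's: the two structural lemmas you posit are exactly \cref{proposition:naturalityOfShriek} (base change along a cube of pullback squares) and \cref{proposition:functorialityOfShriek} (functoriality under composition), and the paper likewise reduces associativity and the Frobenius relation to $3\times 3$ grids in which the outer squares commute by naturality and the inner square by functoriality. Two points, however, are genuine gaps rather than routine checks. First, commutativity. Since $\ext^{\bar m}_{\cochain{\spheresp[k-1]{M}}}(\cochain{M},\cochain{\spheresp[k-1]{M}})\cong\K$ and the involution induced by the orientation reversal squares to the identity, a soft argument only gives that the transposed coproduct agrees with the original \emph{up to an undetermined sign} $\pm 1$; the same holds for $\shriek\diag$ and the product. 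Whether the operations are commutative or anticommutative is exactly this sign, and there is no diagrammatic way to pin it down: the paper must prove \cref{equation:commutativityOfDeltaShriek} and \cref{equation:commutativityOfInclconstShriek}, i.e.\ that the signs are $(-1)^m$ and $(-1)^{\bar m}$, and this occupies all of \cref{section:determineSign} (using an explicit cocycle representative of $\shriek\diag$ on a semi-pure Sullivan model and an evaluation-map argument) and \cref{section:proofOfExtAlgebraic} (a spectral-sequence reduction of the action on $\ext_{\wedge\susp[k-1]V}(\K,\wedge\susp[k-1]V)$ to rank-one factors). Your ``mild orientation check'' is precisely the new technical content here --- the paper notes that commutativity was unknown even for $k=1$ --- and your proposal supplies no mechanism for it.

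Second, in the Frobenius step your key lemma asserts that forming $\shriek\diag$ and $\shriek\inclconst$ along the two independent legs ``in either order yields the same map.'' That is false as stated: on cochain representatives one has $[\id\otimes\shriekdiag]\circ[\shriekinclconst\otimes\id]=(-1)^{m\bar m}[\shriekinclconst\otimes\shriekdiag]$ versus $[\shriekinclconst\otimes\id]\circ[\id\otimes\shriekdiag]=[\shriekinclconst\otimes\shriekdiag]$, so the two composites differ by the Koszul sign $(-1)^{m\bar m}$, and the precise statement of the Frobenius compatibility is commutativity of the relevant diagram up to exactly that sign. Your interchange lemma should be stated and proved with this sign; otherwise the Frobenius identity you obtain is wrong whenever $m\bar m$ is odd. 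Apart from these sign issues, your reductions (triple diagonal for associativity of the product, the analogous coassociativity for $\inclconst$, and the mixed diagram for Frobenius) do match the paper's proof.
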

\todo{Explain Frobenius algebra}
\todo{commutativity,unitality}

Note that $M$ is a Gorenstein space by the assumption $\dim\pi_*(M)\otimes\K < \infty$
(see \cref{proposition:FinDimImplyGorenstein}).
The associativity of the product holds even
if we assume that $M$ is a Gorenstein space instead of assuming $\dim\pi_*(M)\otimes\K < \infty$.
But we need the assumption to prove the commutativity of the \textit{product}.

A non-unital and non-counital Frobenius algebra corresponds to a ``positive boundary'' TQFT,
in the sense that TQFT operations are defined
only when each component of the cobordism surfaces has
a \textit{positive} number of incoming and outgoing boundary components \cite{cohen-godin}.

See \cref{section:proofOfAssocAndFrob} for the precise statement and the proof of the associativity, the commutativity and the Frobenius compatibility.
It is interesting that
the proof of the commutativity of the loop coproduct (i.e., $k=1$) is easier
than that of the brane coproduct with $k\geq 2$.
In fact,
we prove the commutativity of the loop coproduct
using the explicit description of the loop coproduct constructed in \cite{wakatsuki16}.
On the other hand,
we prove the commutativity of the brane coproduct with $k\geq 2$ directly from the definition.

Moreover, we compute an example of the brane product and coproduct.
Here, we consider the shifted homology $\homolshift[*]{\spheresp[k]{M}} = \homol[*+m]{\spheresp[k]{M}}$.
We also have the shifts of the brane product and coproduct on $\homolshift{\spheresp[k]{M}}$
with the sign determined by the Koszul sign convention.
\begin{theorem}
  \label{theorem:braneOperationsOfSphere}
  \newcommand{\M}{S^{2n+1}}
  The shifted homology $\homolshift{\spheresp[2]\M}$, $n\geq 1$, equipped with the brane product $\bprod$
  is isomorphic to the exterior algebra
  $\wedge(y,z)$ with $\deg{y}=-2n-1$ and $\deg{z}=2n-1$.
  The brane coproduct $\bcop$ is described as follows.
  \begin{align*}
    \bcop(1) &= 1\otimes yz - y\otimes z + z\otimes y + yz\otimes 1\\
    \bcop(y) &= y\otimes yz + yz\otimes y\\
    \bcop(z) &= z\otimes yz + yz\otimes z\\
    \bcop(yz) &= - yz\otimes yz
  \end{align*}
\end{theorem}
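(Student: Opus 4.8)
The plan is to compute the brane product and coproduct for $M=S^{2n+1}$ by reducing everything to rational homotopy theory and the algebraic models for Gorenstein spaces developed by Félix--Thomas. For the product, I would first identify the homology $\homol{\spheresp[2]{S^{2n+1}}}$ as a graded vector space. Since $M=S^{2n+1}$ is rationally formal with $\pi_*(M)\otimes\K$ concentrated in a single odd degree, the mapping space $M^{S^2}$ should have a small Sullivan model, and its homology should be free on two generators. With $M$ being $(2n)$-connected we have $m=2n+1$, so the shift sends the product to degree $*-m$; matching the claimed degrees $\deg{y}=-2n-1$ and $\deg{z}=2n-1$ against the fibration $\Omega^2 M\to M^{S^2}\xrightarrow{\ev} M$ identifies $y$ with a class coming from the fiber $\Omega^2 S^{2n+1}$ (degree $2n-1$) and the product structure as the exterior algebra $\wedge(y,z)$. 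I expect the product computation to follow by recognizing $\homolshift{\spheresp[2]{M}}$ as the Hochschild-type or loop homology of the relevant Gorenstein space and invoking the associativity and commutativity already granted by \cref{theorem:associativeFrobenius}.

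For the coproduct, the key is the defining diagram \eqref{equation:STProdDiagram}--\eqref{equation:STCopDiagram} with $S=T=S^2$, where the shift is by $\bar m=\dim\Omega^{k-1}M=\dim\Omega S^{2n+1}$. First I would compute $\bar m$: the based loop space $\Omega S^{2n+1}$ has rational cohomology a polynomial algebra on one generator of degree $2n$, so as a Gorenstein space its formal dimension $\bar m$ is $-2n$ (the shift being negative is exactly why the brane coproduct raises degree and produces the nontrivial classes $y,z$ of negative and positive degree). Then the coproduct $\bcop=\incl_*\circ\shriekhomol{\comp}$ must be computed by explicitly modeling the shriek map $\shriekhomol{\comp}$ attached to the finite-codimension embedding $\inclconst\colon M\to\spheresp[1]{M}$, pushing forward along $\incl$, and reading off coefficients on the basis $\{1,y,z,yz\}\otimes\{1,y,z,yz\}$ of $\homol{M^S\times M^T}$.

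The main obstacle will be pinning down the shriek map $\shriekhomol{\comp}$ and hence the exact coefficients and signs in the four formulas for $\bcop(1),\bcop(y),\bcop(z),\bcop(yz)$. In the Gorenstein-space framework the shriek map is defined only up to the choice of an orientation class (a generator of the relevant Ext-group), so I would fix such a class once and for all and carry the Koszul signs through consistently; the asymmetry between the two tensor factors in the middle terms of $\bcop(1)$ (namely $-y\otimes z + z\otimes y$) is precisely the kind of sign that must be tracked carefully and is where errors are most likely. To obtain the coefficients I would build an explicit Sullivan (or dg-coalgebra) model for the three-term cospan $\spheresp[k-1]{M}\xleftarrow{\res}\spheresp[k]{M}$, compute the induced maps on homology, and use the duality pairing making $\homolshift{\spheresp[2]{M}}$ a Frobenius algebra to cross-check: the coproduct should be the adjoint of the product with respect to the Frobenius pairing, so the relations $\bcop(yz)=-yz\otimes yz$ and the pairing of $y$ with $z$ give strong consistency constraints. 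I would verify the final answer by checking coassociativity and the Frobenius compatibility from \cref{theorem:associativeFrobenius}\ref{item:Frob} against these explicit formulas.
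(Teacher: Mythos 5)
Your overall route---pass to Sullivan models and compute the shriek maps explicitly---is the paper's route: it takes $(\wedge V,d)=(\wedge x,0)$ with $\deg{x}=2n+1$, the models $(\wedge(x,\susp{x}),0)$ for $\loopsp{S^{2n+1}}$ and $(\wedge(x,\susp{x},\susp[2]{x}),\ d\susp[2]{x}=\susp{x})$ for $(S^{2n+1})^{D^2}$, and the torsion-functor descriptions of $\dprod$ and $\dcop$ from \cref{section:computeExample}. But the plan has concrete gaps. First, $\bar m=\dim\Omega S^{2n+1}$ equals $1-2n$, not $-2n$: $\ext_{\Q[u]}(\K,\Q[u])$ with $\deg{u}=2n$ is concentrated in degree $-(2n-1)$ (use the Koszul resolution $\Q[u]\otimes\wedge v$ with $dv=u$, $\deg{v}=2n-1$). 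With $\bar m=-2n$ every degree in the four coproduct formulas comes out wrong. Second, the step you yourself flag as the main obstacle---the shriek map attached to $\inclconst\colon M\to\loopsp{M}$---is not settled by ``fixing a generator and carrying the signs'': one needs an explicit cocycle representative \emph{and} a proof that its class is nonzero in $\ext_{\wedge(x,\susp{x})}(\wedge x,\wedge(x,\susp{x}))$, since an arbitrary cocycle one writes down could be a coboundary (hence the zero shriek map). The paper supplies exactly this in \cref{proposition:shriekInclconstForPure} (here $\shriekinclconst(\susp[2]{x})=1$ and $\shriekinclconst(1)=0$), proving nontriviality by evaluating against a $\tor$ class; without an equivalent statement the coefficients and signs in $\bcop$ cannot be extracted.

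Third, two of your intended shortcuts are unavailable. The Frobenius structure here is non-unital and non-counital, so there is no Frobenius pairing and the coproduct is \emph{not} the adjoint of the product; indeed the stated formulas admit no counit (test $(\epsilon\otimes\id)\circ\bcop$ on $yz$), so this ``cross-check'' would contradict the correct answer rather than confirm it. The coproduct must be computed independently, which is the whole point of the theorem. Similarly, invoking associativity and commutativity from \cref{theorem:associativeFrobenius} does not determine the product on a four-dimensional graded vector space; the paper computes $\dprod(1)=1\otimes x-x\otimes 1$ and $\dprod(\susp[2]{x})=(1\otimes x-x\otimes 1)(\susp[2]{x}\otimes 1+1\otimes\susp[2]{x})$ directly from the model and then dualizes. (A minor slip: the fiber class of $\Omega^2S^{2n+1}$ sits in $H_{2n-1}$ and is $yz$ after shifting, not $y$; $y$ is the shift of the point class in $H_0$.)
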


Note that both of the brane product and coproduct are non-trivial.
Moreover, $(\bcop\otimes 1)\circ\bcop\neq 0$
in contrast with the case of the loop coproduct,
in which the similar composition is always trivial \cite[Theorem A]{tamanoi}.

On the other hand,
the brane coproduct is trivial in some cases.

\begin{theorem}
  \label{theorem:coprodTrivForPure}
  If the minimal Sullivan model $(\wedge V, d)$ of $M$ is pure
  and satisfies $\dim V^{\mathrm{even}}>0$,
  then the brane coproduct on $\homol{\spheresp[2]{M}}$ is trivial.
\end{theorem}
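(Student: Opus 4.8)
The plan is to compute the brane coproduct $\bcop$ through the defining diagram \eqref{equation:STCopDiagram} with $k=2$ and $S=T=\spheresp[2]{}$, and to show that already the lifted shriek map $\shriekhomol\comp\colon\homol{\spheresp[2]{M}}\to\homol[*-\bar m]{\spheresp[2]{M}\times_M\spheresp[2]{M}}$ vanishes, so that $\bcop=\incl_*\circ\shriekhomol\comp=0$. The left-hand square of \eqref{equation:STCopDiagram} is a pullback: a map $\spheresp[2]{}\to M$ whose restriction to the equatorial $S^1$ is constant is exactly an element of $\spheresp[2]{M}\times_M\spheresp[2]{M}$. Hence $\shriekhomol\comp$ is obtained from $\shriekhomol\inclconst\colon\homol{\loopsp M}\to\homol[*-\bar m]{M}$ by base change along the restriction map $\res\colon\spheresp[2]{M}\to\loopsp M$. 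It therefore suffices to identify the cohomological data computing $\shriekhomol\inclconst$ and to show that its pullback along $\res$ is zero.

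The geometric input is that $\res$ factors, up to homotopy, through constant loops. Indeed, for $f\colon S^2\to M$ the loop $\res(f)=f|_{S^1}$ bounds the upper hemisphere, so contracting the equator to the north pole inside that disk gives a homotopy $\res\simeq\inclconst\circ\ev$, where $\ev\colon\spheresp[2]{M}\to M$ is the evaluation at the north pole; this contraction is natural in $f$. Consequently $\res^*=\ev^*\circ\inclconst^*$, and since $\inclconst^*\colon\cohom{\loopsp M}\to\cohom{M}$ is the projection killing the loop generators $\susp[-1]{V}$ in the Sullivan model $(\wedge(V\oplus\susp[-1]{V}),D)$ of $\loopsp M$, we obtain $\res^*(\susp[-1]{v})=0$ for every generator $v$ of the minimal model of $M$.

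Next I would use purity to pin down which part of the shriek data survives as a genuine cohomology class. By the Poincar\'e--Birkhoff--Witt theorem, for a pure model the fibre $\Omega M$ of $\ev\colon\loopsp M\to M$ satisfies $\homol{\Omega M}\cong\wedge(\susp[-1]{V^{\mathrm{even}}})\otimes\K[\susp[-1]{V^{\mathrm{odd}}}]$, so each even generator $x\in V^{\mathrm{even}}$ contributes an \emph{exterior} (odd) class $\susp[-1]{x}$, while the odd generators contribute the polynomial factor. The Gorenstein duality of $\Omega M$ splits accordingly into an exterior part and a polynomial part: on the exterior part the umkehr is a cap product with the top exterior class, which is the fibre restriction of the loop classes $\susp[-1]{x}\in\cohom{\loopsp M}$, whereas on the polynomial part it is a residue-type umkehr carrying no genuine cohomology class. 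After base change along $\res$, the exterior factor of $\shriekhomol\comp$ becomes the cap product with the pullbacks $\res^*(\susp[-1]{x})$, each of which vanishes by the previous paragraph, while the polynomial factor would survive; but a single vanishing exterior factor annihilates the whole operation. Since $\dim V^{\mathrm{even}}>0$ there is at least one such factor, so $\shriekhomol\comp=0$ and hence $\bcop=0$.

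The hard part will be the last step: making rigorous, at the level of the F\'elix--Thomas Ext-models, that the dualizing module of $\Omega M$ factors as (exterior umkehr)\,$\otimes$\,(polynomial umkehr) compatibly with base change, so that one vanishing exterior factor forces the entire shriek map to vanish, and confirming that the exterior contribution really is a cap product with classes $\susp[-1]{x}$ killed by $\res^*$. This is exactly the point at which the hypothesis $\dim V^{\mathrm{even}}>0$ is indispensable, and it is consistent with \cref{theorem:braneOperationsOfSphere}: for $M=\spheresp[2n+1]{}$ there is no even generator, the shriek is purely of residue type in the polynomial direction, it survives base change along $\res$, and the brane coproduct is nonzero. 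I would therefore carry out the factorization term by term in the Ext-models and verify the base-change compatibility of each factor separately.
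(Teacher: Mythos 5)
Your overall mechanism is the right one and agrees with the paper's: for a pure model the shriek map for $\inclconst\colon M\to\loopsp{M}$ is ``residue along the polynomial directions $\susp[2]{y_j}$ followed by multiplication by the top exterior class $\susp{x_1}\cdots\susp{x_p}$'', the classes $\susp{x_i}$ die on $\spheresp[2]{M}$ because restriction to the equator is naturally null-homotopic, with $p>0$ this kills the coproduct, and with $p=0$ nothing is killed, consistent with \cref{theorem:braneOperationsOfSphere}. But the step you defer as ``the hard part'' is the entire mathematical content of the theorem, and as proposed it has a real gap: $\shriek\inclconst$ is a class in $\ext_{\cochain{\loopsp{M}}}(\cochain{M},\cochain{\loopsp{M}})$, i.e.\ Ext over the \emph{total space} $\loopsp{M}$, not over the fibre $\Omega M$, so a tensor factorization of the Gorenstein dualizing data of $\Omega M$ into exterior and polynomial factors does not by itself produce a representative of $\shriek\inclconst$, let alone one compatible with base change; passing from fibrewise information to the actual class is exactly what the spectral sequence of \cref{theorem:extAlgebraic} is needed for. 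Moreover purity enters not where you place it (the splitting of $\homol{\Omega M}$ into exterior and polynomial factors holds for any minimal model) but in verifying that the candidate representative is a cocycle at all.

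The paper closes this gap with \cref{proposition:shriekInclconstForPure}: it writes down the explicit $(\wedge V\otimes\wedge\susp{V})$-linear map $\shriekinclconst$ on the resolution $\wedge V\otimes\wedge\susp{V}\otimes\wedge\susp[2]{V}$ sending $\susp[2]{y_1}\cdots\susp[2]{y_q}\mapsto\susp{x_1}\cdots\susp{x_p}$ and all shorter $\susp[2]{y}$-monomials to $0$, checks it is a cocycle (this is where $d(V^{\mathrm{even}})=0$ and $d(V^{\mathrm{odd}})\subset\wedge V^{\mathrm{even}}$ are used), and proves it represents a nonzero Ext class by evaluating against $\tor_{\wedge V\otimes\wedge\susp{V}}(\wedge V,\wedge V\otimes\wedge\susp{V}/I)$ for the $d$-stable ideal $I=(x_i,y_j,\susp{y_j})$. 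Once that representative is in hand the theorem is one line: $\im(\shriekinclconst\otimes\id)$ lies in the ideal $(\susp{x_1},\dots,\susp{x_p})$, which the quasi-isomorphism $\tilde\varepsilon\otimes\id$ kills on the nose, so the cocycles in its image are exact and $\dcop=0$. To complete your argument you would have to supply an equivalent of \cref{proposition:shriekInclconstForPure}; the factorization language does not substitute for it.
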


See \cref{definition:pureSullivanAlgebra} for the definition of a pure Sullivan algebra.

\begin{remark}
  \label{remark:generalizedConnectedSum}
  \todo{orientation?}
  If we fix embeddings of disks $D^k\hookrightarrow S$ and $D^k\hookrightarrow T$
  instead of assuming $S$ and $T$ being manifolds,
  we can define the product and coproduct using ``connected sums'' defined by these embedded disks.
  Moreover, if we have two disjoint embeddings $i,j\colon D^k\hookrightarrow S$ to the same space $S$,
  we can define the ``connected sum'' along $i$ and $j$,
  and hence we can define the product and coproduct using this.
  We call these $(S,i,j)$-brane product and coproduct,
  and give definitions in \cref{section:definitionOfSijBraneOperations}.
\end{remark}

\todo{outline}
\cref{section:constructionFelixThomas} contains brief background material on string topology on Gorenstein spaces.
We define the $(S,T)$-brane product and coproduct in \cref{section:definitionOfBraneOperations}
and $(S,i,j)$-brane product and coproduct in \cref{section:definitionOfSijBraneOperations}.
Here, we defer the proof of \cref{corollary:extSphereSpace}
to  \cref{section:constructModel}.
In \cref{section:computeExample},
we compute examples and prove \cref{theorem:braneOperationsOfSphere} and \cref{theorem:coprodTrivForPure}.
\cref{section:proofOfAssocAndFrob} is devoted to the proof of \cref{theorem:associativeFrobenius},
where we defer the determination of some signs
to \cref{section:determineSign} and \cref{section:proofOfExtAlgebraic}.

\tableofcontents

\section{Construction by F\'elix and Thomas}
\label{section:constructionFelixThomas}
In this section,
we recall the construction of the loop product and coproduct by F\'elix and Thomas \cite{felix-thomas09}.
Since the cochain models are good for fibrations,
the duals of the loop product and coproduct are defined at first,
and then we define the loop product and coproduct as the duals of them.
Moreover we focus on the case when the characteristic of the coefficient $\K$ is zero.
So we make full use of rational homotopy theory.
For the basic definitions and theorems on homological algebra and rational homotopy theory,
we refer the reader to \cite{felix-halperin-thomas01}.

\begin{definition}
  [{\cite{felix-halperin-thomas88}}]
  Let $m\in\Z$ be an integer.
  \begin{enumerate}
    \item An augmented dga (differential graded algebra) $(A,d)$ is called a \textit{($\K$-)Gorenstein algebra of dimension} $m$ if
      \begin{equation*}
        \dim \ext_A^l(\K, A) =
        \begin{cases}
          1 & \mbox{ (if $l = m$)} \\
          0 & \mbox{ (otherwise),}
        \end{cases}
      \end{equation*}
      where the field $\K$ and the dga $(A,d)$ are $(A,d)$-modules via the augmentation map and the identity map, respectively.
    \item A path-connected topological space $M$ is called a \textit{($\K$-)Gorenstein space of dimension} $m$
      if the singular cochain algebra $\cochain{M}$ of $M$ is a Gorenstein algebra of dimension $m$.
  \end{enumerate}
\end{definition}

Here, $\ext_A(M, N)$ is defined using a semifree resolution of $(M,d)$ over $(A,d)$,
for a dga $(A,d)$ and $(A,d)$-modules $(M,d)$ and $(N,d)$.
$\tor_A(M,N)$ is defined similarly.
See \cite[Section 1]{felix-thomas09} for details of semifree resolutions.

An important example of a Gorenstein space is given by the following \lcnamecref{proposition:FinDimImplyGorenstein}.

\begin{proposition}
  [{\cite[Proposition 3.4]{felix-halperin-thomas88}}]
  \label{proposition:FinDimImplyGorenstein}
  A 1-connected topological space $M$ is a $\K$-Gorenstein space if $\pi_*(M)\otimes\K$ is finite dimensional.
  Similarly, a Sullivan algebra $(\wedge V, d)$ is a Gorenstein algebra if $V$ is finite dimensional.
\end{proposition}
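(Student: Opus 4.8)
The plan is to reduce the topological statement to the algebraic one, and then to prove the algebraic statement by an induction on the (finite) dimension of $V$.

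First I would reduce to the Sullivan algebra case. If $M$ is $1$-connected and $\pi_*(M)\otimes\K$ is finite dimensional, then $M$ admits a minimal Sullivan model $(\wedge V,d)$ with $V\cong\hom(\pi_*(M)\otimes\K,\K)$ finite dimensional, and the cochain algebra $\cochain{M}$ is connected to $(\wedge V,d)$ by a zig-zag of quasi-isomorphisms of dgas. Since $\ext_A(\K,A)$ is computed from a semifree resolution, it is invariant under quasi-isomorphism of augmented dgas: a quasi-isomorphism $A\to A'$ carries a semifree resolution of $\K$ over $A$ to one over $A'$ and induces an isomorphism on $\ext$. Hence $M$ is Gorenstein of dimension $m$ if and only if $(\wedge V,d)$ is, and it suffices to prove the second assertion.

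Next I would set up the computation of $\ext_{\wedge V}(\K,\wedge V)$ via the acyclic closure. The augmentation $\K$ has a semifree resolution of the form $(\wedge V\otimes\wedge U,D)\xrightarrow{\simeq}\K$, where $U = s^{-1}\bar V$ is the desuspension of $\bar V = V$, so that an even generator of $V$ contributes an odd (exterior) generator to $U$ and an odd generator of $V$ contributes an even (polynomial, i.e.\ divided-power in characteristic zero) generator to $U$; the linear part of $D$ is the Koszul differential $s^{-1}v\mapsto v$. Applying $\hom_{\wedge V}(-,\wedge V)$ then identifies $\ext_{\wedge V}(\K,\wedge V)$ with the cohomology of $(\wedge V\otimes(\wedge U)^\vee,D^\vee)$, and the whole problem becomes showing that this cohomology is one dimensional, concentrated in a single total degree. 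To carry this out I would induct on $\dim V$: choosing a relative Sullivan decomposition $(\wedge V',d)\hookrightarrow(\wedge V,d)$ in which $V/V'$ is spanned by a single top generator $v$ yields a Koszul-type short resolution in that variable and a change-of-rings (Eilenberg--Moore) spectral sequence relating $\ext_{\wedge V}(\K,\wedge V)$ to $\ext_{\wedge V'}(\K,\wedge V')$. The two one-generator base cases are explicit: an exterior algebra on one odd generator of degree $p$ is Gorenstein of dimension $p$, and a polynomial algebra on one even generator of degree $p$ is Gorenstein of dimension $1-p$. Additivity of the Gorenstein dimension along the extension then gives that $(\wedge V,d)$ is Gorenstein of dimension $m=\sum_{\deg{v}\text{ odd}}\deg{v}-\sum_{\deg{v}\text{ even}}(\deg{v}-1)$.

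The main obstacle is the interaction between the two tensor factors caused by the nonzero differential $d$: the dual differential $D^\vee$ on $\wedge V\otimes(\wedge U)^\vee$ is not the tensor-product differential, and for odd generators of $V$ the factor $(\wedge U)^\vee$ is infinite dimensional, so the conclusion cannot come from a naive dimension count. The crux is to show that the change-of-rings spectral sequence collapses to a single surviving class, that is, that each one-variable extension is a Gorenstein extension and that the fundamental class of the fiber multiplies correctly onto that of the base. Controlling this interaction, rather than the formal bookkeeping of degrees, is where the real work lies.
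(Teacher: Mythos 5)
The paper itself contains no proof to compare against: the proposition is quoted from \cite[Proposition 3.4]{felix-halperin-thomas88}, together with the remark that the argument given there for $\Q$-Gorenstein spaces applies verbatim over any $\K$ of characteristic zero and to Sullivan algebras. Measured against that cited argument, your skeleton is the standard one, and the parts you spell out are correct: the reduction of the topological statement to the algebraic one by quasi-isomorphism invariance of $\ext_A(\K,A)$ (a semifree resolution of $\K$ over $A$ base-changes along a quasi-isomorphism $A\to A'$ to one over $A'$), the filtration of $(\wedge V,d)$ by one-generator relative Sullivan extensions, and the two base cases with the right dimensions: $\ext_{\wedge y}(\K,\wedge y)\cong\K$ concentrated in degree $p$ for an odd generator of degree $p$, and $\ext_{\K[x]}(\K,\K[x])\cong\K$ concentrated in degree $1-p$ for an even generator of degree $p$, giving the expected formula for $m$.

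The genuine gap is that the inductive step---the only substantive assertion---is stated but never proved. You assert that each one-generator extension is a ``Gorenstein extension,'' i.e.\ that the change-of-rings spectral sequence collapses to a single surviving class and the dimensions add, and you correctly remark that this is where the real work lies; but naming the hard point is not doing it. The difficulty is concrete: when the added generator is odd, the acyclic closure acquires a polynomial generator, so the complex $\hom_{\wedge V}(\wedge V\otimes\wedge U,\wedge V)$ computing $\ext_{\wedge V}(\K,\wedge V)$ contains infinite products in each degree, and the filtration-by-base-degree spectral sequence you would invoke is attached to an inverse system whose convergence is precisely what must be established. The paper's proof of the closely related \cref{theorem:extAlgebraic}, given in \cref{section:proofOfExtAlgebraic}, shows the shape of the required argument: one truncates the base to $A/A^{>N}$, proves collapse at $E_2$ for each truncation using the Gorenstein hypothesis on the fiber, and then recovers $\ext$ over the full algebra from the Milnor exact sequence comparing $\cohom{\lim_N C_N}$ with $\lim_N\cohom{C_N}$, the error term $\limone_N\cohom{C_N}$ vanishing by the Mittag-Leffler condition, which in turn follows from the $E_2$-computation. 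Until you supply an argument of this kind (or reproduce the corresponding inductive lemma of \cite{felix-halperin-thomas88}), what you have is a correct plan with accurate bookkeeping, not a proof.
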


Note that this \lcnamecref{proposition:FinDimImplyGorenstein} is stated
only for $\Q$-Gorenstein spaces in \cite{felix-halperin-thomas88},
but the proof can be applied for any $\K$ and Sullivan algebras.

Let $M$ be a 1-connected $\K$-Gorenstein space of dimension $m$ whose cohomology $\cohom{M}$ is of finite type.
As a preparation to define the loop product and coproduct, F\'elix and Thomas proved the following theorem.

\begin{theorem}
  [{\cite[Theorem 12]{felix-thomas09}}]
  \label{theorem:ExtDiagonal}
  The diagonal map $\diag\colon M \rightarrow M^2$ makes $\cochain{M}$ into a $\cochain{M^2}$-module.
  We have an isomorphism
  \[
  \ext_{\cochain{M^2}}^*(\cochain{M}, \cochain{M^2}) \cong \cohom[*-m]{M}.
  \]
\end{theorem}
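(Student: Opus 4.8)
The plan is to reduce to a commutative (Sullivan) model and compute the Ext group through an explicit resolution of the diagonal module, with the Gorenstein condition entering only at the last step. First I would choose a minimal Sullivan model $(\wedge V,d)$ of $M$, which I abbreviate $A$. Since $\K$ has characteristic zero and $\cohom{M}$ is of finite type, the K\"unneth map makes $A\otimes A$ a model for $M^2$ and identifies $\diag^*\colon\cochain{M^2}\to\cochain{M}$ with the multiplication $\bprod\colon A\otimes A\to A$. Because $\ext$ is invariant under quasi-isomorphisms of the dga and of the two modules involved, it suffices to prove
\[
\ext^*_{A\otimes A}(A,A\otimes A)\cong\cohom[*-m]{M},
\]
where $A$ carries the $A\otimes A$-module structure coming from $\bprod$.

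Next I would build a semifree resolution of the diagonal module. Writing $A\otimes A=(\wedge(\bar V\oplus W),D)$, where $\bar V$ is the first copy of $V$ and $W=\{v\otimes 1-1\otimes v:v\in V\}$ is the ``antidiagonal'' copy, the map $\bprod$ is the quotient killing $W$, so $A\cong(A\otimes A)/(W)$. I would form the Koszul--Tate resolution
\[
P=\bigl((A\otimes A)\otimes\wedge(\susp[-1]{W}),\,D_P\bigr)\xrightarrow{\qis}A,
\]
adjoining generators $\susp[-1]{W}$ that kill $W$, with $D_P(\susp[-1]{w})=w+(\text{decomposable corrections forced by }d)$. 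By the finite-type hypothesis $P$ is of finite type in each degree, although generally of infinite length because of odd generators of $V$. Applying $\hom_{A\otimes A}(-,A\otimes A)$ then reduces the problem to the cohomology of $(A\otimes A)\otimes(\wedge(\susp[-1]{W}))^\vee$ with the dual Koszul differential.

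To finish I would filter $P$ (equivalently $A\otimes A$) by antidiagonal word length in $W$ and pass to the associated spectral sequence. The associated graded module is, up to interchanging the two tensor factors, the pullback of $\cochain{M}$ along the inclusion $a\mapsto a\otimes 1$; for \emph{this} module base change (extension of scalars along that inclusion, which is flat since $A\otimes A$ is free over $A$ on the complementary factor) gives an honest isomorphism, and together with a K\"unneth argument valid in finite type it yields
\[
\ext^*_{A\otimes A}(\text{gr},A\otimes A)\cong\ext_A(\K,A)\otimes_\K\cohom{M}.
\]
The Gorenstein hypothesis says $\ext_A(\K,A)$ is one-dimensional, concentrated in degree $m$, so this is exactly $\cohom[*-m]{M}$; the spectral sequence then converges to $\ext^*_{A\otimes A}(A,A\otimes A)$.

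I expect the main obstacle to be the last paragraph. The diagonal module $A$ is genuinely different from the product-type associated graded module: the differential $D_P$ on the antidiagonal generators carries real correction terms coming from $d$, so one cannot simply quote a base-change isomorphism for the diagonal itself (the naive extension of scalars resolves the ``slice'' module, not the diagonal). The crux is therefore to prove convergence of the word-length spectral sequence and its collapse onto $\ext_A(\K,A)\otimes\cohom{M}$, rather than onto a larger Hochschild-type object; this is where the finite-type hypothesis and the simple-connectivity of $M$ are genuinely used, the single-degree concentration of $\ext_A(\K,A)$ furnished by the Gorenstein condition being precisely what leaves no room for higher differentials.
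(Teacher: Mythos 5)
Your overall strategy matches the one the paper actually uses: the paper does not reprove F\'elix--Thomas's Theorem 12 directly but derives it (in characteristic zero) as the $k=1$ case of \cref{theorem:extAlgebraic}, applied to the multiplication map $(\wedge V,d)^{\otimes 2}\rightarrow(\wedge V,d)$ (see the proof of \cref{corollary:extSphereSpace}). That proof, given in \cref{section:proofOfExtAlgebraic}, likewise takes a semifree resolution $P$ of the diagonal module, filters the resulting Hom-complex so that the associated graded sees only the ``fiber'' $\ext_B(\K,B)$ tensored with the base $A$, invokes the Gorenstein condition to concentrate that fiber in the single degree $m$, and concludes collapse onto $\cohom[*-m]{A}$. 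The differences from your sketch are that (i) your filtration is by word length in the antidiagonal generators of the Koszul--Tate resolution, whereas the paper filters by the degree of the factor $A$ appearing in the target, and (ii) you commit to one explicit resolution while the paper's argument is resolution-independent. Neither difference is essential; your identification of the $E_2$-page with $\ext_A(\K,A)\otimes\cohom{M}$ agrees with the paper's computation \cref{equation:computationOfE1}.

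The genuine gap is exactly where you say you expect trouble, and it is not primarily the identification of the associated graded: it is convergence. In each total degree $\hom_{A\otimes A}(P,A\otimes A)$ is an infinite product over the generators of $P$ (infinitely many as soon as $V$ has an even generator or is infinite-dimensional), so the word-length filtration is neither bounded nor exhaustive in the sense needed for the spectral sequence to converge to the abutment; concentration of $E_2$ in one line excludes higher differentials but does not by itself give convergence or identify the target. The paper's device is to truncate the target first, setting $C_N=\hom_{A\otimes B}(P,(A/A^{>N})\otimes B)$, where the filtration is finite and convergence is automatic, and then to recover $\cohom{C}$ as $\lim_N\cohom{C_N}$ via the exact sequence with the error term $\limone_N\cohom{C_N}$, which vanishes by Mittag--Leffler --- a condition itself verified from the $E_2$-computation, so the Gorenstein hypothesis enters there too. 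Your proposal needs some such truncation-and-limit (or conditional convergence) argument to close; as written, the final paragraph states the conclusion rather than proving it.
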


By \cref{theorem:ExtDiagonal}, we have $\ext_{\cochain{M^2}}^m(\cochain{M}, \cochain{M^2})\cong\cohom[0]{M}\cong\K$, hence the generator
\[
\shriek\diag \in \ext_{\cochain{M^2}}^m(\cochain{M}, \cochain{M^2})
\]
is well-defined up to the multiplication by a non-zero scalar.
We call this element the \textit{shriek map} for $\diag$.

Using the map $\shriek\diag$, we can define the duals of the loop product and coproduct.
Then, using the diagram \cref{equation:loopProdDiagram},
we define the dual of the loop product to be the composition
\begin{equation*}
  \shriek\incl\circ\comp^*\colon\cohom{\loopsp{M}}
  \xrightarrow{\comp^*}\cohom{\loopsp{M}\times_M\loopsp{M}}
  \xrightarrow{\shriek\incl}\cohom[*+m]{\loopsp{M}\times\loopsp{M}}.
\end{equation*}
Here, the map $\shriek\incl$ is defined by the composition
\begin{equation*}
  \begin{array}{l}
    \cohom{\loopsp{M}\times_M \loopsp{M}}
    \xleftarrow[\cong]{\mathrm{EM}} \tor^*_{\cochain{M^2}}(\cochain{M},\cochain{\loopsp{M}\times \loopsp{M}}) \\
    \xrightarrow{\tor_\id(\shriek\diag, \id)}\tor^{*+m}_{\cochain{M^2}}(\cochain{M^2},\cochain{\loopsp{M}\times \loopsp{M}})
    \xrightarrow[\cong]{} \cohom[*+m]{\loopsp{M}\times \loopsp{M}},
  \end{array}
\end{equation*}
where the map $\mathrm{EM}$ is the Eilenberg-Moore map, which is an isomorphism (see \cite[Theorem 7.5]{felix-halperin-thomas01} for details).
Similarly, using the diagram \cref{equation:loopCopDiagram},
we define the dual of the loop coproduct to be the composition
\begin{equation*}
  \shriek\comp\circ\incl^*\colon\cohom{\loopsp{M}\times\loopsp{M}}
  \xrightarrow{\incl^*}\cohom{\loopsp{M}\times_M\loopsp{M}}
  \xrightarrow{\shriek\comp}\cohom{\loopsp{M}}.
\end{equation*}
Here, the map $\shriek\comp$ is defined by the composition
\begin{equation*}
  \begin{array}{l}
    \cohom{\loopsp{M}\times_M \loopsp{M}}
    \xleftarrow[\cong]{\mathrm{EM}} \tor^*_{\cochain{M^2}}(\cochain{M}, \cochain{\loopsp{M}})\\
    \xrightarrow{\tor_\id(\shriek\diag, \id)} \tor^{*+m}_{\cochain{M^2}}(\cochain{M^2}, \cochain{\loopsp{M}})
    \xrightarrow[\cong]{} \cohom[*+m]{\loopsp{M}}.
  \end{array}
\end{equation*}

\section{Definition of $(S,T)$-brane coproduct}
\label{section:definitionOfBraneOperations}
Let $\K$ be a field of characteristic zero,
$S$ and $T$ manifolds of dimension $k$,\todo{connected? connected sum at where}
and $M$ a $k$-connected Gorenstein space of finite type.
As in the construction by F\'elix and Thomas,
which we reviewed in \cref{section:constructionFelixThomas},
we construct the duals
\begin{align*}
  \dprod_{ST}\colon& \cohom{M^{S\#T}} \rightarrow \cohom[*+\dim M]{M^S\times M^T}\\
  \dcop_{ST}\colon& \cohom{M^S\times M^T} \rightarrow \cohom[*+\dim\Omega^{k-1} M]{M^{S\#T}}
\end{align*}
of the $(S,T)$-brane product and the $(S,T)$-brane coproduct.

The $(S,T)$-brane product is defined by a similar way to that of F\'elix and Thomas.
Using the diagram \cref{equation:STProdDiagram},
we define $\dprod_{ST}$ to be the composition
\begin{equation*}
  \shriek\incl\circ\comp^*\colon\cohom{M^{S\#T}}
  \xrightarrow{\comp^*}\cohom{M^S\times_MM^T}
  \xrightarrow{\shriek\incl}\cohom[*+m]{M^S\times M^T}.
\end{equation*}
Here, the map $\shriek\incl$ is defined by the composition
\begin{equation*}
  \begin{array}{l}
    \cohom{M^S\times_M M^T}
    \xleftarrow[\cong]{\mathrm{EM}} \tor^*_{\cochain{M^2}}(\cochain{M},\cochain{M^S\times M^T}) \\
    \xrightarrow{\tor_\id(\shriek\diag, \id)}\tor^{*+m}_{\cochain{M^2}}(\cochain{M^2},\cochain{M^S\times M^T})
    \xrightarrow[\cong]{} \cohom[*+m]{M^S\times M^T},
  \end{array}
\end{equation*}

Next,\todo{,?} we begin the definition of the $(S,T)$-brane coproduct.
But \cref{theorem:ExtDiagonal} cannot be applied to this case
since the base map of the pullback is $\inclconst\colon M\rightarrow\spheresp[k-1]{M}$.

Instead of \cref{theorem:ExtDiagonal},
we use the following theorem to define the $(S,T)$-brane coproduct.
A graded algebra $A$ is \textit{connected} if $A^0=\K$ and $A^i=0$ for any $i<0$.
A dga $(A,d)$ is \textit{connected} if $A$ is connected.

\newcommand{\dimb}{\bar{m}}
\begin{theorem}
  \label{theorem:extAlgebraic}
  \todo{characteristic can be nonzero}
  \todo{Gor dim $m$? $\bar{m}$?}
  Let $(A\otimes B, d)$ be a dga such that
  $A$ and $B$ are connected commutative graded algebras,
  $(A,d)$ is a sub dga of finite type,
  and $(A\otimes B, d)$ is semifree over $(A,d)$.
  Let $\eta\colon(A\otimes B, d) \rightarrow (A,d)$ be a dga homomorphism.
  Assume that the following conditions hold.
  \begin{enumerate}[label={\rm(\alph{enumi})}]
    \item \label{item:assumpResId} The restriction of $\eta$ to $A$ is the identity map of $A$.
    \item \label{item:assumpGorenstein} The dga $(B,\bar{d})=\K\otimes_A(A\otimes B, d)$ is a Gorenstein algebra of dimension $\dimb$.
    \item \label{item:assump1conn} For any $b\in B$, the element $db-\bar{d}b$ lies in $A^{\geq 2}\otimes B$.
  \end{enumerate}
  Then we have an isomorphism
  \begin{equation*}
    \ext^*_{A\otimes B}(A, A\otimes B) \cong \cohom[*-\dimb]{A}.
  \end{equation*}
\end{theorem}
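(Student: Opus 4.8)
The plan is to compute $\ext^*_{A\otimes B}(A,A\otimes B)$ through an explicit semifree resolution of $A$ over $(A\otimes B,d)$ that is obtained by ``spreading out'' over $A$ a semifree resolution of the ground field over the Gorenstein fibre $(B,\bar d)$. Once this is in hand the computation reduces to a small spectral sequence, and the whole force of the Gorenstein hypothesis \cref{item:assumpGorenstein} enters through a single fact: $\ext_B^*(\K,B)$ is one dimensional and concentrated in degree $\bar{m}$. This one ``dualizing line'' is what both pins the answer to a single row and produces the degree shift by $\bar{m}$, while the base complex $(A,d)$ passes through untouched and contributes its cohomology $\cohom{A}$.

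First I would fix a semifree resolution $\varepsilon\colon (B\otimes\wedge Z,D)\xrightarrow{\qis}\K$ of $\K$ as a $(B,\bar d)$-module. The aim of this step is to lift it to a semifree resolution
\[
  \rho\colon \bigl((A\otimes B)\otimes\wedge Z,\ \tilde D\bigr)\xrightarrow{\qis} A
\]
of $A$ as an $(A\otimes B,d)$-module, where $\rho$ is $\eta$ on $A\otimes B$ and annihilates $\wedge^{\ge 1}Z$. The differential $\tilde D$ is built by the standard inductive lifting over the semifree basis of $\wedge Z$, starting from $d$ on $(A\otimes B)\otimes\wedge^0 Z$ and lifting $D$; assumption \cref{item:assumpResId} is exactly what makes $\rho$ linear for the module structure on $A$ given by $\eta$. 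To check that $\rho$ is a quasi-isomorphism I would reduce modulo the augmentation ideal $A^{\ge 1}$: by \cref{item:assumpResId} the reduction of $\eta$ is the augmentation $B\to\K$, and by \cref{item:assump1conn} the reduction of $\tilde D$ is exactly $D$, so $\rho\otimes_A\K$ is the chosen resolution $\varepsilon$. Since both sides are semifree over the connected dga $(A,d)$, the comparison lemma for semifree modules (a map is a quasi-isomorphism once it is so after $-\otimes_A\K$) then upgrades this to $\rho$ being a quasi-isomorphism.

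With this resolution, $\ext^*_{A\otimes B}(A,A\otimes B)$ is the cohomology of
\[
  \hom_{A\otimes B}\bigl((A\otimes B)\otimes\wedge Z,\ A\otimes B\bigr)\ \cong\ A\otimes\hom_\K(\wedge Z, B),
\]
with differential induced by $\tilde D$ and $d$. I would filter this complex by the cohomological degree of the $A$-factor. The induced differential then splits into three pieces: the fibre differential $1\otimes\delta$ (degree $0$ in the $A$-factor, where $\delta$ is the differential of $\hom_B(B\otimes\wedge Z,B)$), the base differential $d_A\otimes 1$ (degree $+1$), and the base--fibre coupling, which by \cref{item:assump1conn} raises the $A$-degree by at least $2$. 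Hence the $E_0$-differential is exactly $\delta$, and $E_1\cong A\otimes\ext_B^*(\K,B)$. By \cref{item:assumpGorenstein} this is $A\otimes\K\cdot\omega$, concentrated in the single fibre-row of degree $\bar{m}$. Because $E_1$ occupies one fibre-row, every differential $d_r$ with $r\ge 2$ must leave that row and so vanishes; and \cref{item:assump1conn} guarantees that the only degree-$1$ contribution, namely $d_1$, is $d_A\otimes 1$ with no spurious linear coupling. Therefore $E_2=E_\infty\cong H(A)\otimes\K\cdot\omega\cong\cohom[*-\bar m]{A}$, with convergence ensured by the finite type hypothesis on $(A,d)$.

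The routine but delicate part is the construction of $\tilde D$ and the verification that $\rho$ is a quasi-isomorphism, together with the bookkeeping needed to confirm that the only degree-$1$ term of the total differential is $d_A$. I expect the genuine conceptual content to sit in the interplay of the two hypotheses at the $E_1$-page: \cref{item:assump1conn} forbids a degree-one coupling between base and fibre that would otherwise contribute to $d_1$ and deform the base differential, while \cref{item:assumpGorenstein} confines the fibre cohomology to a single class in a single degree, forcing the single-row collapse. Together they make all higher differentials internal to the base, so that the answer is precisely $\cohom{A}$ shifted by the Gorenstein dimension $\bar m$.
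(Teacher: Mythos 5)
Your spectral sequence is essentially the paper's: filter by the cohomological degree of the $A$-factor, identify $E_1\cong A\otimes\ext^*_B(\K,B)$ using \cref{item:assumpResId} to see that the reduced resolution resolves $\K$ over $(B,\bar d)$ and \cref{item:assump1conn} to see that $d_1$ is just $d_A$, and let \cref{item:assumpGorenstein} collapse everything to the single row $q=\bar m$. The difference in how the resolution is produced (you lift a resolution of $\K$ over $(B,\bar d)$ up to one of $A$ over $(A\otimes B,d)$; the paper takes an arbitrary semifree resolution $P\xrightarrow{\qis}A$ and pushes it down to $B\otimes_{A\otimes B}P\xrightarrow{\qis}\K$) is immaterial, though going downward spares you the comparison lemma needed to check your lift is a quasi-isomorphism.

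The gap is in the identification $\hom_{A\otimes B}\bigl((A\otimes B)\otimes\wedge Z,\ A\otimes B\bigr)\cong A\otimes\hom_\K(\wedge Z,B)$ and in the closing clause ``convergence ensured by the finite type hypothesis on $(A,d)$''. That Hom is $\hom_\K(\wedge Z,A\otimes B)$, a \emph{product} over a basis of $\wedge Z$; since $\wedge Z$ is in general infinite dimensional with generators in unbounded degrees, a single cochain can have nonzero components in infinitely many $A$-degrees, so it need not be a finite sum $\sum a_i\otimes f_i$, and the filtration by $A$-degree, while exhaustive and Hausdorff, is not bounded in each total degree. Concentration of $E_2$ in one row does not by itself give strong convergence to the cohomology of the full complex; this is precisely the point the paper does not skip. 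Its proof first replaces the coefficients $A\otimes B$ by $(A/A^{>N})\otimes B$ --- here finite type of $A$ makes the filtration finite in each degree, so the spectral sequence honestly converges and yields $\cohom{C_N}\cong\cohom[*-\bar m]{A/A^{>N}}$ --- and then recovers $\cohom{C}$ as $\lim_N\cohom{C_N}$ via the Milnor exact sequence, using the $E_2$-computation a second time to verify the Mittag--Leffler condition and kill the $\limone$ term. You need either to add this truncation-and-limit step or to invoke a complete-convergence theorem and verify its hypotheses for your filtration; as written the convergence claim is unsupported, and it is the one genuinely delicate point of the proof.
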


This can be proved by a similar method to \cref{theorem:ExtDiagonal} \cite[Theorem 12]{felix-thomas09}.
The proof is given in \cref{section:proofOfExtAlgebraic}.

Applying to sphere spaces,
we have the following corollary.
\begin{corollary}
  \label{corollary:extSphereSpace}
  Let $M$ be a $(k-1)$-connected (and 1-connected) space with $\pi_*(M)\otimes\K$ of finite dimension.
  Then we have an isomorphism
  \begin{equation*}
    \ext^*_{\cochain{\spheresp[k-1]{M}}}(\cochain{M}, \cochain{\spheresp[k-1]{M}}) \cong \cohom[*-\bar{m}]{M},
  \end{equation*}
  where $\bar{m}$ is the dimension of $\Omega^{k-1}M$ as a Gorenstein space.
\end{corollary}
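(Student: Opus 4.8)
The plan is to deduce the corollary from \cref{theorem:extAlgebraic} by exhibiting an explicit Sullivan model of the sphere space $\spheresp[k-1]{M}$ of exactly the algebraic shape required there. Fix a minimal Sullivan model $(\wedge V, d)$ of $M$ and set $A = (\wedge V, d)$; since $\pi_*(M)\otimes\K$ is finite dimensional, $V$ is finite dimensional. The evaluation fibration $\Omega^{k-1}M \rightarrow \spheresp[k-1]{M} \xrightarrow{\ev} M$ admits the constant-map section $\inclconst\colon M\rightarrow\spheresp[k-1]{M}$, and the standard model of a mapping space out of a sphere produces a relative Sullivan algebra $(A\otimes B, D)$ for $\spheresp[k-1]{M}$ with $B = \wedge\bar V$, where $\bar V = \susp[-(k-1)]{V}$ is the degree shift of $V$ by $k-1$; this $A\otimes B$ is semifree over $A$. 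The dga map $\eta\colon (A\otimes B, D)\rightarrow (A,d)$ modelling $\inclconst^*$ is the identity on $A$ and kills $\bar V$. The construction of this model and the verification that it represents $\spheresp[k-1]{M}$ compatibly with $\inclconst$ is carried out in \cref{section:constructModel}.

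It then remains to check the three hypotheses of \cref{theorem:extAlgebraic}. Condition \ref{item:assumpResId} holds by construction of $\eta$. For \ref{item:assumpGorenstein}, note that $(B,\bar d) = \K\otimes_A(A\otimes B, D)$ is the fiber model, i.e.\ a Sullivan model of $\Omega^{k-1}M$; since $\bar V$ is finite dimensional, the Sullivan-algebra form of \cref{proposition:FinDimImplyGorenstein} shows that $(B,\bar d)$ is a Gorenstein algebra, and its dimension is $\bar m = \dim\Omega^{k-1}M$ by definition. Condition \ref{item:assump1conn} is where the connectivity hypothesis enters: because $M$ is $(k-1)$-connected with $k\geq 2$, the generators $V$ live in degrees $\geq k\geq 2$, so $A^1 = 0$ and hence $A^+ = A^{\geq 2}$. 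As $\bar d b$ is by definition the reduction of $Db$ modulo $A^+\otimes B$, we get $Db - \bar d b \in A^+\otimes B = A^{\geq 2}\otimes B$ for every $b\in B$, which is exactly \ref{item:assump1conn}.

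With the hypotheses verified, \cref{theorem:extAlgebraic} yields $\ext^*_{A\otimes B}(A, A\otimes B)\cong\cohom[*-\bar m]{A}\cong\cohom[*-\bar m]{M}$, the last isomorphism because $A$ is a model of $M$. Finally I would transport this computation to singular cochains: the quasi-isomorphism $(A\otimes B, D)\qis\cochain{\spheresp[k-1]{M}}$ together with $A\qis\cochain{M}$ identifies, via the invariance of $\ext$ under quasi-isomorphisms of dgas and of modules, the algebraic group $\ext^*_{A\otimes B}(A, A\otimes B)$ with $\ext^*_{\cochain{\spheresp[k-1]{M}}}(\cochain{M}, \cochain{\spheresp[k-1]{M}})$, giving the claimed isomorphism. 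I expect the main obstacle to lie not in the hypothesis-checking above but in the construction deferred to \cref{section:constructModel}: one must produce the relative model $(A\otimes B, D)$ in sufficiently explicit form to read off that $\K\otimes_A(A\otimes B, D)$ is the minimal model of $\Omega^{k-1}M$ and that the $A\otimes B$-module structure on $A$ induced by $\eta$ genuinely matches the one induced by $\inclconst^*$ on cochains.
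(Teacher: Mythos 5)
Your proposal follows essentially the same route as the paper: it uses the explicit relative Sullivan model of $\spheresp[k-1]{M}$ constructed in \cref{section:constructModel} (the map $\phi$ of \cref{proposition:modelOfInclConst}) and then applies \cref{theorem:extAlgebraic}, with the added benefit that you spell out the verification of hypotheses \ref{item:assumpResId}--\ref{item:assump1conn}, which the paper leaves implicit. The only detail you gloss over is the case $k=1$, where $\spheresp[0]{M}=M\times M$ and the map $\eta$ must be taken to be the multiplication $(\wedge V,d)^{\otimes 2}\rightarrow(\wedge V,d)$ rather than a map killing the second tensor factor; the paper treats this case separately for exactly that reason.
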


To prove the corollary,
we need to construct models of sphere spaces satisfying the conditions of \cref{theorem:extAlgebraic}.
This will be done in \cref{section:constructModel}.

Note that, since $\spheresp[0]M = M\times M$, this is a generalization of \cref{theorem:ExtDiagonal}
(in the case that the characteristic of $\K$ is zero).

Assume that $M$ is a $k$-connected space with $\pi_*(M)\otimes\K$ of finite dimension.
Then we have
$\ext^{\bar{m}}_{\cochain{\spheresp[k-1]{M}}}(\cochain{M}, \cochain{\spheresp[k-1]{M}}) \cong \cohom[0]{M}\cong\K$,
hence the shriek map for $\inclconst\colon M\rightarrow\spheresp[k-1]{M}$ is defined to be the generator
\begin{equation*}
  \shriek\inclconst\in
  \ext^{\bar{m}}_{\cochain{\spheresp[k-1]{M}}}(\cochain{M}, \cochain{\spheresp[k-1]{M}}),
\end{equation*}
which is well-defined up to the multiplication by a non-zero scalar.
Using $\shriek\inclconst$ with the diagram \cref{equation:STCopDiagram},
we define the dual $\dcop_{ST}$ of the $(S,T)$-brane coproduct to be the composition
\begin{equation*}
  \shriek\comp\circ\incl^*\colon\cohom{M^S\times M^T}
  \xrightarrow{\incl^*}\cohom{M^S\times_MM^T}
  \xrightarrow{\shriek\comp}\cohom{M^{S\#T}}.
\end{equation*}
Here, the map $\shriek\comp$ is defined by the composition
\begin{equation*}
  \begin{array}{l}
    \cohom{M^S\times_M M^T}
    \xleftarrow[\cong]{\mathrm{EM}} \tor^*_{\cochain{\spheresp[k-1]{M}}}(\cochain{M}, \cochain{M^{S\#T}})\\
    \xrightarrow{\tor_\id(\shriek\inclconst, \id)} \tor^{*+\bar{m}}_{\cochain{\spheresp[k-1]{M}}}(\cochain{\spheresp[k-1]{M}}, \cochain{M^{S\#T}})
    \xrightarrow[\cong]{} \cohom[*+\bar{m}]{\loopsp{M}}.
  \end{array}
\end{equation*}
Note that the Eilenberg-Moore isomorphism can be applied since $\spheresp[k-1]{M}$ is 1-connected.

\section{Definition of $(S,i,j)$-brane product and coproduct}
\label{section:definitionOfSijBraneOperations}
\newcommand{\connsum}{\#}
\newcommand{\wedgesum}{\bigvee}
\newcommand{\diskcrush}{Q}
\newcommand{\smalld}{D}
\newcommand{\intd}{\smalld^\circ}
\newcommand{\bd}{\partial \smalld}

In this section, we give a definition of $(S,i,j)$-brane product and coproduct.
Let $S$ be a topological space, and $i$ and $j$ embeddings $D^k\rightarrow S$.
Fix a small $k$-disk $\smalld\subset D^k$ and denote its interior by $\intd$ and its boundary by $\bd$.
Then we define three spaces $\connsum(S,i,j)$, $\diskcrush(S,i,j)$, and $\wedgesum(S,i,j)$ as follows.
The space $\connsum(S,i,j)$ is obtained from $S\setminus(i(\intd)\cup j(\intd))$
by gluing $i(\bd)$ and $j(\bd)$ by an orientation reversing homeomorphism.
We obtain $\diskcrush(S,i,j)$ by collapsing two disks $i(\smalld)$ and $j(\smalld)$ to two points, respectively.
$\wedgesum(S,i,j)$ is defined as the quotient space of $\diskcrush(S,i,j)$ identifying the two points.
Then, since the quotient space $D^k/\smalld$ is homeomorphic to the disk $D^k$,
we identify $\diskcrush(S,i,j)$ with $S$ itself.
By the above definitions, we have the maps
$\connsum(S,i,j)\rightarrow\wedgesum(S,i,j)$ and $S=\diskcrush(S,i,j)\rightarrow\wedgesum(S,i,j)$.
For a space $M$, these maps induce the maps
$\comp\colon M^{\wedgesum(S,i,j)}\rightarrow M^{\connsum(S,i,j)}$
and $\incl\colon M^{\wedgesum(S,i,j)}\rightarrow M^S$.
Moreover, we have diagrams
\begin{equation*}
  \xymatrix{
    M^S \ar[d] & M^{\wedgesum(S,i,j)} \ar_-\incl[l]\ar[d]\ar^-\comp[r] & M^{\connsum(S,i,j)}\\
    M\times M & M \ar_-\diag[l]
  }
\end{equation*}
and
\begin{equation*}
  \xymatrix{
    M^{\connsum(S,i,j)}\ar[d] & M^{\wedgesum(S,i,j)} \ar_-\comp[l]\ar[d]\ar^-\incl[r] & M^S\\
    \spheresp[k-1]{M} & M \ar_-\inclconst[l]
  }
\end{equation*}
in which the squares are pullback diagrams.
\todo{remove label?}
If $M$ is a $k$-connected space with $\pi_*(M)\otimes\K$ of finite dimension,
we define the $(S,i,j)$-brane product and coproduct by a similar method to \cref{section:definitionOfBraneOperations},
using these diagrams instead of the diagrams \cref{equation:STProdDiagram} and \cref{equation:STCopDiagram}.
Note that this generalizes $(S,T)$-brane product and coproduct defined in \cref{section:definitionOfBraneOperations}.

\section{Construction of models and proof of \cref{corollary:extSphereSpace}}
\label{section:constructModel}
In this section, we give a proof of \cref{corollary:extSphereSpace},
constructing a Sullivan model of the dga homomorphism
$\inclconst^*\colon \cochain{\spheresp[k-1]{M}}\rightarrow\cochain{M}$
satisfying the assumptions of \cref{theorem:extAlgebraic}.

\newcommand{\susplow}{s^{(k-1)}}
\newcommand{\susphigh}{s^{(k)}}
\newcommand{\diffsphere}[1][k-1]{\bar{d}^{(#1)}}
\newcommand{\diffdisk}[1][k]{d^{(#1)}}

First, we construct models algebraically.
Let $(\wedge V, d)$ be a Sullivan algebra.
For an integer $l\in \Z$,
let $\susp[l]V$ be a graded module defined by $(\susp[l]V)^n=V^{n+l}$
and $\susp[l]v$ denotes the element in $\susp[l]V$ corresponding to the element $v \in V$.

Define two derivations $\susplow$ and $\diffsphere$
on the graded algebra $\wedge V\otimes \wedge \susp[k-1]V$ by
\begin{align*}
  &\susplow(v)=\susp[k-1]v,\quad \susplow(\susp[k-1]v)=0, \\
  &\diffsphere(v)=dv,\ \mbox{and}\quad\diffsphere(\susp[k-1]v)=(-1)^{k-1}\susplow dv.
\end{align*}
Then it is easy to see that
$\diffsphere\circ\diffsphere=0$ and hence
$(\wedge V\otimes \wedge\susp[k-1]V, \diffsphere)$ is a dga.

Similarly, define derivations $\susphigh$ and $\diffdisk$
on the graded algebra $\wedge V\otimes \wedge\susp[k-1]V \otimes\wedge\susp[k]V$ by
\begin{align*}
  &\susphigh(v)=\susp[k]v,\quad\susphigh(\susp[k-1]v)=\susphigh(\susp[k]v)=0,\quad%
    \diffdisk(v)=dv, \\
  &\diffdisk(\susp[k-1]v)=\diffsphere(\susp[k-1]v),\ %
    \mbox{and}\quad\diffdisk(\susp[k]v)=\susp[k-1]v+(-1)^k\susphigh dv.
\end{align*}
Then it is easy to see that
$\diffdisk\circ\diffdisk=0$ and hence
$(\wedge V\otimes \wedge\susp[k-1]V \otimes\wedge\susp[k]V, \diffdisk)$ is a dga.

Note that the tensor product
$(\wedge V, d) \otimes_{\wedge V\otimes\wedge\susp[k-1]V}(\wedge V\otimes\wedge\susp[k-1]V\otimes\wedge\susp[k]V,\diffdisk)$
is canonically isomorphic to $(\wedge V\otimes\wedge\susp[k]V,\diffsphere[k])$,
where $(\wedge V, d)$ is a $(\wedge V\otimes\wedge\susp[k-1]V, \diffsphere)$-module by the dga homomorphism
$\phi\colon (\wedge V\otimes\wedge\susp[k-1]V, \diffsphere) \rightarrow (\wedge V, d)$
defined by $\phi(v)=v$ and $\phi(\susp[k-1]v)=0$.

It is clear that,
if $V^{\leq k-1}=0$,
the dga $(\wedge V\otimes\wedge\susp[k-1]V,\diffsphere)$ is a Sullivan algebra
and, if $V^{\leq k}=0$,
the dga $(\wedge V\otimes \wedge\susp[k-1]V \otimes\wedge\susp[k]V, \diffdisk)$
is a relative Sullivan algebra over $(\wedge V\otimes\wedge\susp[k-1]V,\diffsphere)$.

\newcommand{\tveps}{\tilde{\varepsilon}}
Define a dga homomorphism
\begin{equation*}
  \tilde{\varepsilon}\colon
  (\wedge V\otimes \wedge\susp[k-1]V \otimes\wedge\susp[k]V, \diffdisk)
  \rightarrow (\wedge V, d)
\end{equation*}
by $\tveps(v)=v$ and $\tveps(\susp[k-1]v)=\tveps(\susp[k]v)=0$.
Then the linear part
\begin{equation*}
  Q(\tveps)\colon
  (V \oplus\susp[k-1]V \oplus\susp[k]V, \diffdisk_0)
  \rightarrow (V, d_0)
\end{equation*}
is a quasi-isomorphism,
and hence $\tveps$ is a quasi-isomorphism \cite[Proposition 14.13]{felix-halperin-thomas01}.

Using these algebras,
we have the following proposition.

\begin{proposition}
  \label{proposition:modelOfInclConst}
  Let $k\geq 2$ be an integer,
  $M$ a $(k-1)$-connected (and 1-connected) Gorenstein space of finite type,
  and $(\wedge V, d)$ its Sullivan model such that $V^{\leq k-1}=0$ and $V$ is of finite type.
  Then the dga homomorphism
  $\phi\colon (\wedge V\otimes\wedge\susp[k-1]V, \diffsphere) \rightarrow (\wedge V, d)$
  is a Sullivan representative of the map
  $\inclconst\colon M\rightarrow\spheresp[k-1]{M}$,
  i.e., there is a homotopy commutative diagram
  \begin{equation*}
    \xymatrix{
      (\wedge V\otimes\wedge\susp[k-1]V, \diffsphere) \ar[r]^-\phi \ar[d]^\qis & (\wedge V, d) \ar[d]^\qis\\
      \cochain{\spheresp[k-1]{M}} \ar[r]^-{\inclconst^*} & \cochain{M}
    }
  \end{equation*}
  such that the vertical arrows are quasi-isomorphisms.
\end{proposition}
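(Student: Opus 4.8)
The plan is to prove, by induction on $k\geq 2$, the slightly stronger statement that $(\wedge V\otimes\wedge\susp[k-1]V,\diffsphere)$ is a Sullivan model of $\spheresp[k-1]{M}$ for which the inclusion $\iota\colon(\wedge V,d)\hookrightarrow(\wedge V\otimes\wedge\susp[k-1]V,\diffsphere)$, $\iota(v)=v$, is a Sullivan representative of the evaluation $\ev\colon\spheresp[k-1]{M}\rightarrow M$ at the basepoint, while $\phi$ is a Sullivan representative of $\inclconst$. Carrying $\iota$ and $\ev$ along is what makes the induction close up. The hypothesis $V^{\leq k-1}=0$ ensures that $\susp[k-1]V$ is concentrated in degrees $\geq 1$, so the algebra is a genuine $1$-connected Sullivan algebra; it also forces $\spheresp[k-2]{M}$ to be $1$-connected in the inductive step (its generators lie in degrees $\geq 2$), so that the Eilenberg--Moore theorem applies. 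Note that the Gorenstein hypothesis is not used for this \lcnamecref{proposition:modelOfInclConst}; only finite type and connectivity are.

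For the base case $k=2$ the space $\spheresp[1]{M}=\loopsp{M}$ is the free loop space, and the assertion is the classical Sullivan model of $\loopsp{M}$ (a standard computation, cf. \cite{felix-halperin-thomas01}): its differential is precisely $\diffsphere$ with $k-1=1$, evaluation at the basepoint is represented by $\iota$, and the inclusion of the constant loops is represented by $\phi$.

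For the inductive step $k\geq 3$ I would realize $S^{k-1}$ as the pushout of $\ast\leftarrow S^{k-2}\hookrightarrow D^{k-1}$, i.e.\ $S^{k-1}\cong D^{k-1}/\partial D^{k-1}$. Applying $M^{(-)}$ turns this into a homotopy pullback $\spheresp[k-1]{M}\qis M\times_{\spheresp[k-2]{M}}M^{D^{k-1}}$, whose two legs to $\spheresp[k-2]{M}$ are $\inclconst$ and the boundary restriction $\res$. Since $D^{k-1}$ is contractible, $\res$ becomes homotopic to $\inclconst$ under the equivalence $M^{D^{k-1}}\qis M$, so by the induction hypothesis both legs are represented by $\phi$ at level $k-1$. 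Taking the relative Sullivan model $(\wedge V\otimes\wedge\susp[k-2]V\otimes\wedge\susp[k-1]V,\diffdisk[k-1])$ together with the quasi-isomorphism $\tilde{\varepsilon}$ as a cofibrant replacement of the contractible factor $M^{D^{k-1}}$, the Eilenberg--Moore theorem identifies $\cochain{\spheresp[k-1]{M}}$ with the pushout
\[
  (\wedge V,d)\otimes_{(\wedge V\otimes\wedge\susp[k-2]V,\diffsphere[k-2])}(\wedge V\otimes\wedge\susp[k-2]V\otimes\wedge\susp[k-1]V,\diffdisk[k-1]),
\]
which, by the canonical isomorphism recorded before this \lcnamecref{proposition:modelOfInclConst} (with $k$ replaced by $k-1$), is exactly $(\wedge V\otimes\wedge\susp[k-1]V,\diffsphere)$. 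Reading off the structure map from the left factor then shows that $\iota$ represents $\ev$.

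It remains to identify $\phi$ with $\inclconst$, and this is the step that requires the most care. Since $\ev\circ\inclconst=\id_M$ and $\iota$ represents $\ev$, every representative of $\inclconst$ must restrict to the identity on $\wedge V$; and because a constant map $S^{k-1}\rightarrow M$ factors through a point, it annihilates the fibre classes $\susp[k-1]V$ arising from $\widetilde{H}^{k-1}(S^{k-1})$. These two properties pin down $\phi$. The genuine obstacle, therefore, is the naturality bookkeeping in the Eilenberg--Moore comparison: one must verify that the comparison isomorphism transports $\ev$, $\inclconst$ and $\res$ to $\iota$, $\phi$ and $\tilde{\varepsilon}$, and in particular that the null-homotopy contracting $M^{D^{k-1}}$ is the one realized algebraically by $\tilde{\varepsilon}$. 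By contrast, matching the differential with $\diffsphere$ is \emph{not} an obstacle, as it is already packaged in the canonical isomorphism above. As an alternative to the induction, one could instead read the model off directly from the Haefliger, or Brown--Szczarba, model of $\map(S^{k-1},M)$, at the cost of re-deriving the same differential and the same maps.
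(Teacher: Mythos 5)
Your proposal is correct and follows essentially the same route as the paper: induction on $k$ with the free loop space model as the base case, and an inductive step that realizes $\spheresp[k-1]{M}$ as the pullback of $\inclconst$ and the boundary restriction $M^{D^{k-1}}\rightarrow\spheresp[k-2]{M}$, modeled by the tensor product of (relative) Sullivan models, with the canonical isomorphism identifying the result with $(\wedge V\otimes\wedge\susp[k-1]V,\diffsphere)$. The additional bookkeeping you carry along (that $\iota$ represents $\ev$, and the characterization pinning down $\phi$) is left implicit in the paper's two-line inductive step but is consistent with it.
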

\begin{proof}
  We prove the proposition by induction on $k\geq 2$.
  The case $k=2$ is well-known.
  See \cite[Section 15 (c) Example 1]{felix-halperin-thomas01} or \cite[Appendix A]{wakatsuki16} for details.

  Assume that the proposition holds for some $k$.
  Consider the commutative diagram
  \begin{equation*}
    \begin{tikzcd}[row sep=2.5em]
      M \arrow[rr,"="] \arrow[dr,swap,"\inclconst"] \arrow[dd,swap,"="] &&
      M \arrow[dd,swap,"=" near start] \arrow[dr,""] \\
      & \spheresp[k]{M} \arrow[rr,crossing over,"" near start] &&
      M^{D^k} \arrow[dd,"\res"] \\
      M \arrow[rr,"=" near end] \arrow[dr,swap,"="] && M \arrow[dr,swap,"\inclconst"] \\
      & M \arrow[rr,"\inclconst"] \arrow[uu,<-,crossing over,"\ev" near end]&& \spheresp[k-1]{M},
    \end{tikzcd}
  \end{equation*}
  where the front and back squares are pullback diagrams.
  Since any pullback diagram of a fibration is modeled by a tensor product of Sullivan algebras \cite[Section 15 (c)]{felix-halperin-thomas01},
  this proves the proposition.
\end{proof}

\begin{proof}
  [Proof of \cref{corollary:extSphereSpace}]
  In the case $k=1$, apply \cref{theorem:extAlgebraic} to the product map
  $(\wedge V, d)^{\otimes 2}\rightarrow (\wedge V, d)$.
  (Note that this case is a result of F\'elix and Thomas \cite{felix-thomas09}.)

  In the case $k\geq 2$, using \cref{proposition:modelOfInclConst},
  apply \cref{theorem:extAlgebraic} to the map $\phi$.
\end{proof}

\section{Computation of examples}
\label{section:computeExample}
\newcommand{\M}{S^{2n+1}}

\makeatletter
\newcommand{\@tensorpower}[1]{\ifx#1\relax\else^{\otimes #1}\fi}
\newcommand{\@modelcommand}[3]{%
  \ifx#2\relax
    {#1}\@tensorpower{#3}
  \else
    #1(#2)\@tensorpower{#3}
  \fi
}
\newcommand{\@mpath}{{\mathcal M}_\mathrm{P}}
\newcommand{\@mloop}{{\mathcal M}_\mathrm{L}}
\newcommand{\mpath}[1][\relax]{\@modelcommand{\@mpath}{\relax}{#1}}
\newcommand{\mpathv}[2][\relax]{\@modelcommand{\@mpath}{\wedge #2}{#1}}
\newcommand{\mloop}[1][\relax]{\@modelcommand{\@mloop}{\relax}{#1}}
\newcommand{\mloopv}[2][\relax]{\@modelcommand{\@mloop}{\wedge #2}{#1}}
\newcommand{\mdisk}[1]{\@modelcommand{{\mathcal M}_{D^{#1}}}{\relax}{\relax}}
\newcommand{\msphere}[1]{\@modelcommand{{\mathcal M}_{S^{#1}}}{\relax}{\relax}}
\makeatother

In this section,
we will compute the brane product and coproduct for some examples,
which proves \cref{theorem:braneOperationsOfSphere,theorem:coprodTrivForPure}.

In \cite{naito13}, the duals of the loop product and coproduct are described in terms of Sullivan models using the torsion functor description of \cite{kuribayashi-menichi-naito}.
By a similar method,
we can describe the brane product and coproduct as follows.

Let $M$ be a $k$-connected $\K$-Gorenstein space of finite type
and $(\wedge V, d)$ its Sullivan model such that $V^{\leq k}=0$ and $V$ is of finite type.
Denote $(\wedge V\otimes\wedge\susp[k]{V}, \diffsphere[k])$ by $\msphere{k}$
and $(\wedge V\otimes\wedge\susp[k-1]{V}\otimes\wedge\susp[k]{V}, \diffdisk[k])$ by $\mdisk{k}$
(see \cref{section:constructModel} for the definitions).
Define a relative Sullivan algebra $\mpath=(\wedge V\tpow2\otimes\wedge \susp{V}, d)$
over $(\wedge V, d)\tpow2$ by the formula
\begin{equation*}
  d(\susp v)=1\otimes v - v\otimes 1 - \sum_{i=1}^\infty\frac{(sd)^i}{i!}(v\otimes 1)
\end{equation*}
inductively
(see \cite[Section 15 (c)]{felix-halperin-thomas01} or \cite[Appendix A]{wakatsuki16} for details\todo{wakatsuki16?,inductively?}).
Note that $\cohom{\msphere{k}}\cong\cohom{\spheresp[k]{M}}$ and
$\cohom{\mdisk{k}}\cong\cohom{M^{D^k}}\cong\cohom{M}$.
Then the dual of the brane product on $\cohom{\spheresp[k]{M}}$ is induced by the composition
\begin{equation*}
  \begin{array}{l}
    \msphere{k}
    \xrightarrow{\cong} \wedge V \otimes_{\msphere{k-1}} \mdisk{k}
    \xleftarrow[\qis]{\tilde{\varepsilon}\otimes \id} \mdisk{k} \otimes_{\msphere{k-1}} \mdisk{k}
    \xrightarrow{(\phi\otimes\id)\otimes_\phi(\phi\otimes\id)} \msphere{k} \otimes_{\wedge V} \msphere{k}\\
    \xrightarrow{\cong} \wedge V \otimes_{\wedge V\tpow2}\msphere{k}\tpow2
    \xleftarrow[\qis]{\bar{\varepsilon}\otimes\id} \mpath\otimes_{\wedge V\tpow2}\msphere{k}\tpow2
    \xrightarrow{\shriekdiag\otimes\id} \wedge V\tpow2\otimes_{\wedge V\tpow2}\msphere{k}\tpow2
    \xrightarrow{\cong} \msphere{k}\tpow2,
  \end{array}
\end{equation*}
where $\shriekdiag$ is a representative of $\shriek\diag$.
See \cref{section:constructModel} for the definitions of the other maps.

Assume that $\pi_*(M)\otimes\K$ is of finite dimension.
Then the dual of the brane coproduct is induced by the composition
\begin{equation*}
  \begin{array}{l}
    \msphere{k}\tpow2
    \xrightarrow{\cong} \wedge V\tpow2 \otimes_{\msphere{k-1}\tpow2} \mdisk{k}\tpow2
    \xrightarrow{\mu\otimes_{\mu'}\eta} \wedge V \otimes_{\msphere{k-1}} (\mdisk{k}\otimes_{\msphere{k-1}}\mdisk{k})\\
    \xleftarrow[\qis]{\tilde{\varepsilon}\otimes\id} \mdisk{k}\otimes_{\msphere{k-1}}(\mdisk{k}\otimes_{\msphere{k-1}}\mdisk{k})
    \xrightarrow{\shriekinclconst\otimes\id} \msphere{k-1}\otimes_{\msphere{k-1}}(\mdisk{k}\otimes_{\msphere{k-1}}\mdisk{k})\\
    \xrightarrow{\cong} \mdisk{k}\otimes_{\msphere{k-1}}\mdisk{k}
    \xrightarrow[\qis]{\tilde{\varepsilon}\otimes\id} \wedge V\otimes_{\msphere{k-1}}\mdisk{k}
    \xrightarrow{\cong} \msphere{k},
  \end{array}
\end{equation*}
where $\shriekinclconst$ is a representative of $\shriek\inclconst$,
the maps $\mu$ and $\mu'$ are the product maps,
and $\eta$ is the quotient map.

As a preparation of computation,
recall the definition of a pure Sullivan algebra.

\begin{definition}
  [c.f. {\cite[Section 32]{felix-halperin-thomas01}}]
  \label{definition:pureSullivanAlgebra}
  A Sullivan algebra $(\wedge V, d)$ with $\dim V < \infty$ is called {\it pure}
  if $d(V^{\rm even})=0$ and $d(V^{\rm odd}) \subset \wedge V^{\rm even}$.
\end{definition}

For a pure Sullivan algebra,
we have an explicit construction of the shriek map $\shriekdiag$ and $\shriekinclconst$.
For $\shriekdiag$, see \cite{naito13}\todo{correct?}.
For $\shriekinclconst$, we have the following proposition.

\begin{proposition}
  \label{proposition:shriekInclconstForPure}
  Let $(\wedge V, d)$ be a pure minimal Sullivan algebra.
  Take bases $V^{\mathrm{even}}=\K\{x_1,\dots x_p\}$ and $V^{\mathrm{odd}}=\K\{y_1,\dots y_q\}$.
  Define a $(\wedge V\otimes\wedge \susp{V}, d)$-linear map
  \begin{equation*}
    \shriekinclconst\colon
    (\wedge V\otimes\wedge\susp{V}\otimes\wedge\susp[2]V, d)
    \rightarrow(\wedge V\otimes\wedge\susp{V}, d)
  \end{equation*}
  by $\shriekinclconst(\susp[2]y_1\cdots\susp[2]y_q)=\susp x_1\cdots\susp x_p$
  and $\shriekinclconst(\susp[2]y_{j_1}\cdots\susp[2]y_{j_l})=0$ for $l<q$.
  Then $\shriekinclconst$ defines a non-trivial element in
  $\ext_{\wedge V\otimes\wedge\susp V}
  (\wedge V,\wedge V\otimes\wedge\susp V)$
\end{proposition}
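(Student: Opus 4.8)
The plan is to compute $\ext_{\wedge V\otimes\wedge\susp V}(\wedge V,\wedge V\otimes\wedge\susp V)$ through the explicit semifree resolution built in \cref{section:constructModel} and to exhibit $\shriekinclconst$ as a cocycle generating its one-dimensional top piece. Since only the suspensions $\susp V$ and $\susp[2]V$ occur, we are in the case $k=2$, so $\loopsp M$ is modeled by $R:=(\wedge V\otimes\wedge\susp V,\diffsphere[1])$ and, by \cref{proposition:modelOfInclConst} together with the constructions of \cref{section:constructModel}, the dga $\tilde R:=(\wedge V\otimes\wedge\susp V\otimes\wedge\susp[2]V,\diffdisk[2])$ with the quasi-isomorphism $\tveps$ is a semifree resolution of $\wedge V$ over $R$. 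Consequently $\ext^*_R(\wedge V,R)$ is the cohomology of $\hom_R(\tilde R,R)$, whose underlying graded space is $\hom_\K(\wedge\susp[2]V,R)$; the $R$-linear map $\shriekinclconst$ (extended by zero on every monomial of $\wedge\susp[2]V$ other than $\susp[2]{y_1}\cdots\susp[2]{y_q}$, in particular on those carrying a factor $\susp[2]{x_i}$) is one such cochain, and it remains to show that it is a cocycle and is not a coboundary.

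For the cocycle property I would, by $R$-linearity, evaluate the Hom-differential only on the module basis $\wedge\susp[2]V$. Purity gives $dx_i=0$, hence $\diffsphere[1](\susp{x_i})=0$, so the value $\susp{x_1}\cdots\susp{x_p}$ is $\diffsphere[1]$-closed and $\diffsphere[1]\circ\shriekinclconst=0$ identically; the cocycle condition therefore reduces to $\shriekinclconst\circ\diffdisk[2]=0$. Writing $\diffdisk[2](\susp[2]{v})=\susp{v}+(-1)^{2}s^{(2)}(dv)$ and using purity ($dx_i=0$ and, by minimality, $dy_j\in\wedge^{\geq2}V^{\mathrm{even}}$), the point is that $\diffdisk[2]$ can produce the detecting monomial $\susp[2]{y_1}\cdots\susp[2]{y_q}$ as its $\susp[2]V$-component only by differentiating a single factor $\susp[2]{x_i}$ in an $\omega=\susp[2]{x_i}\cdot\susp[2]{y_1}\cdots\susp[2]{y_q}$, and then with coefficient $\susp{x_i}$; since $\susp{x_i}$ is odd and already occurs in $\susp{x_1}\cdots\susp{x_p}$, the product $\susp{x_i}\cdot\susp{x_1}\cdots\susp{x_p}$ vanishes. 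All other configurations, and all contributions of the terms $s^{(2)}(dv)$, retain a surplus factor $\susp[2]{x}$ and are annihilated by $\shriekinclconst$. This term-by-term vanishing is the heart of the argument.

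Non-triviality I would obtain by base change to the fibre. Applying $\K\otimes_{\wedge V}(-)$ turns $\tilde R$ into $(\wedge\susp V\otimes\wedge\susp[2]V,\bar\partial)$ with $\bar\partial(\susp[2]{v})=\susp{v}$, and forces the induced differential on $B:=\wedge\susp V$ to vanish (again by purity, the terms $s^{(2)}(dv)$ all carry a factor in $(\wedge V)^{\geq2}$ and die modulo $(\wedge V)^+$). This is precisely the standard Koszul resolution of $\K$ over the free graded-commutative algebra $B=\wedge(\susp{x_1},\dots,\susp{x_p})\otimes\K[\susp{y_1},\dots,\susp{y_q}]$, so a Künneth computation identifies $\ext^*_B(\K,B)$ as one-dimensional, concentrated in homological degree $q=\dim V^{\mathrm{odd}}$, generated by the functional sending $\susp[2]{y_1}\cdots\susp[2]{y_q}$ to $\susp{x_1}\cdots\susp{x_p}$ and all other monomials to $0$. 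This functional is exactly the reduction of $\shriekinclconst$; since $\K\otimes_{\wedge V}(-)$ is a chain map on the Hom-complexes, $\shriekinclconst$ maps to a generator of $\ext_B(\K,B)$ and so cannot be a coboundary. Combined with the cocycle property this shows $\shriekinclconst$ is non-trivial, and the compatibility of the isomorphism of \cref{theorem:extAlgebraic} with reduction to the fibre identifies $[\shriekinclconst]$ with a generator of $\ext^{\bar m}_R(\wedge V,R)\cong\cohom[0]{M}$, where $\bar m=q=\dim\Omega M$.

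The main obstacle is the cocycle verification, where one must keep careful track of which monomials of $\wedge\susp[2]V$ appear in $\diffdisk[2]\omega$ and with which coefficients. Purity is exactly what makes this manageable: it lets every potentially obstructing term vanish on its own---either through $(\susp{x_i})^2=0$ or through the appearance of a surplus factor $\susp[2]{x}$---so that no delicate cancellation between distinct terms has to be exhibited. The remaining Koszul-sign bookkeeping does not affect any of the (non)vanishing statements used above.
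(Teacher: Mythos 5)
Your proof is correct, and the cocycle check supplies exactly the ``straightforward calculation'' the paper leaves implicit: the only dangerous contribution, coming from $\diffdisk[2]$ applied to $\susp[2]x_i\cdot\susp[2]y_1\cdots\susp[2]y_q$, dies because $\susp x_i$ is odd and already occurs in $\susp x_1\cdots\susp x_p$, while purity and minimality make every other term either retain a surplus factor $\susp[2]x_i$ or land on a monomial of length $<q$. For non-triviality, however, you take a genuinely different route from the paper. The paper never computes $\ext_{\wedge\susp V}(\K,\wedge\susp V)$; instead it introduces the ideal $I=(x_1,\dots,x_p,y_1,\dots,y_q,\susp y_1,\dots,\susp y_q)$ of $\wedge V\otimes\wedge\susp V$ (stable under $d$ by purity and minimality) and evaluates $[\shriekinclconst]$ against the single Tor class $[\susp[2]y_1\cdots\susp[2]y_q\otimes 1]\in\tor_{\wedge V\otimes\wedge\susp V}(\wedge V,\wedge V\otimes\wedge\susp V/I)$ via the composition pairing into $\tor_{\wedge V\otimes\wedge\susp V}(\wedge V\otimes\wedge\susp V,\wedge V\otimes\wedge\susp V/I)\cong\wedge\susp V^{\mathrm{even}}$, where the image is $\susp x_1\cdots\susp x_p\neq 0$. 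You instead reduce modulo the ideal generated by $V$, so the resolution becomes the Koszul resolution of $\K$ over the fibre $B=\wedge\susp V$ (purity making the induced differential on $B$ vanish, so that reduction is a chain map of Hom complexes), and you compute $\ext_B(\K,B)$ in full by a K\"unneth argument. The paper's detection is shorter and requires no computation of any Ext group; yours costs the Koszul/K\"unneth computation but yields more, namely that $[\shriekinclconst]$ is a generator of the one-dimensional $\ext^{\bar m}$, in accordance with \cref{corollary:extSphereSpace}. Both quotients play the same detecting role (they differ only in which suspended variables survive), and there is no gap in either argument.
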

\begin{proof}
  By a straightforward calculation, $\shriekinclconst$ is a cocycle in
  $\hom_{\wedge V\otimes\wedge \susp{V}}
  (\wedge V\otimes\wedge\susp{V}\otimes\wedge\susp[2]V,\wedge V\otimes\wedge\susp{V})$.
  In order to prove the non-triviality,
  we define an ideal
  $I=(x_1,\dots, x_p,y_1,\dots, y_q,\susp y_1,\dots,\susp y_q)\subset\wedge V\otimes\wedge\susp V$.
  By the purity and minimality, we have $d(I)\subset I$.
  Using this ideal, we have the evaluation map of the form
  \begin{align*}
    &\ext_{\wedge V\otimes\wedge\susp V}(\wedge V,\wedge V\otimes\wedge\susp V)
      \otimes \tor_{\wedge V\otimes\wedge\susp V}(\wedge V, \wedge V\otimes\wedge \susp V/I) \\
    &\xrightarrow{\ev} \tor_{\wedge V\otimes\wedge\susp V}(\wedge V\otimes\wedge\susp V, \wedge V\otimes\wedge \susp V/I)
      \xrightarrow{\cong} \wedge\susp V^{\mathrm{even}}.
  \end{align*}
  By this map,
  the element $[\shriekinclconst]\otimes[\susp[2]y_1\cdots\susp[2]y_q\otimes 1]$ is mapped to
  the element $\susp x_1\cdots \susp x_p$, which is obviously non-trivial.
  Hence $[\shriekinclconst]$ is also non-trivial.
\end{proof}

Now, we give proofs of \cref{theorem:braneOperationsOfSphere,theorem:coprodTrivForPure}.

\begin{proof}
  [Proof of \cref{theorem:braneOperationsOfSphere}]
  Using the above descriptions,
  we compute the brane product and coproduct for $M=S^{2n+1}$ and $k=2$.
  In this case,
  we can take $(\wedge V, d) = (\wedge x, 0)$ with $\deg{x}=2n+1$,
  and have $\msphere{1}=(\wedge(x,\susp{x}), 0)$
  and $\mdisk{2}=(\wedge(x,\susp{x},\susp[2]x),d)$ where $dx=d\susp{x}=0$ and $d\susp[2]x=\susp{x}$.
  The computation is straightforward except for the shriek maps $\shriekdiag$ and $\shriekinclconst$.
  The map $\shriekdiag$ is the linear map $\mpath\rightarrow(\wedge x, 0)\tpow2$ over $(\wedge x, 0)\tpow2$
  determined by $\shriekdiag(1)=1\otimes x - x\otimes 1$ and $\shriekdiag((\susp{x})^l)=0$ for $l\geq 1$.
  By \cref{proposition:shriekInclconstForPure}, the map $\shriekinclconst$ is the linear map $\mdisk{k}\rightarrow\msphere{k-1}$ over $\msphere{k-1}$
  determined by $\shriekinclconst(\susp[2]x)=1$ and $\shriekinclconst(1)=0$.

  Then the dual of the brane product $\dprod$ is a linear map
  \begin{equation*}
    \dprod\colon \wedge(x,\susp[2]x)\rightarrow\wedge(x,\susp[2]x)\otimes\wedge(x,\susp[2]x).
  \end{equation*}
  of degree $(1-2n)$
  over $\wedge(x)\otimes\wedge(x)$,
  which is characterized by
  \begin{equation*}
    \dprod(1) = 1\otimes x - x\otimes 1,\ %
    \dprod(\susp[2]x) = (1\otimes x - x\otimes 1)(\susp[2]x\otimes 1 + 1\otimes\susp[2]x).
  \end{equation*}
  Similarly, the dual of the brane coproduct $\dcop$ is a linear map
  \begin{equation*}
    \dcop\colon \wedge(x,\susp[2]x)\otimes\wedge(x,\susp[2]x)\rightarrow\wedge(x,\susp[2]x).
  \end{equation*}
  of degree $(1-2n)$
  over $\wedge(x)\otimes\wedge(x)$,
  which is characterized by
  \begin{equation*}
    \dcop(1) = 0,\ %
    \dcop(\susp[2]x\otimes 1) = -1,\ %
    \dcop(1\otimes\susp[2]x) = 1,\ %
    \dcop(\susp[2]x\otimes\susp[2]x) = -\susp[2]x.
  \end{equation*}

  Dualizing these results,
  we get the brane product and coproduct on the homology,
  which proves \cref{theorem:braneOperationsOfSphere}.
\end{proof}

\begin{proof}
  [Proof of \cref{theorem:coprodTrivForPure}]
  By \cref{proposition:shriekInclconstForPure},
  we have that $\im(\shriekinclconst\otimes\id)$ is contained in the ideal $(\susp x_1,\dots \susp x_p)$,
  which is mapped to zero by the map $\tilde\varepsilon\otimes\id$.
\end{proof}

\section{Proof of the associativity, the commutativity, and the Frobenius compatibility}

\newcommand{\transposeProduct}{\tau_{\mathrm{\times}}}
\newcommand{\transposeConnSum}{\tau_{\mathrm{\#}}}
\label{section:proofOfAssocAndFrob}
In this section,
we give a precise statement and the proof of \cref{theorem:associativeFrobenius}.

First, we give a precise statement of \cref{theorem:associativeFrobenius}.
For simplicity, we omit the statement for $(S,i,j)$-brane product and coproduct,
which is almost the same as that for $(S,T)$-brane product and coproduct.
Let $M$ be a $k$-connected $\K$-Gorenstein space of finite type with $\dim\pi_*(M)\otimes\K < \infty$.
Denote $m=\dim M$.
Then the precise statement of \cref{item:assocProd} is
that the diagrams
\begin{equation}
  \label{equation:assocProdDiagram}
  \xymatrix{
    \cohom{M^{S\#T\#U}} \ar[r]^-{\dprod_{S\#T,U}} \ar[d]^{\dprod_{S,T\#U}} &
    \cohom{M^{S\#T}\times M^U} \ar[d]^{\dprod_{S,T\amalg U}} \\
    \cohom{M^S\times M^{T\#U}} \ar[r]^-{\dprod_{S\amalg T,U}} & \cohom{M^S\times M^T\times M^U}
  }
\end{equation}
and
\begin{equation}
  \label{equation:commProdDiagram}
  \xymatrix{
    \cohom{M^{T\#S}} \ar[r]^-{\dprod_{T,S}} \ar[d]^{\transposeConnSum^*} & \cohom{M^T\times M^S} \ar[d]^{\transposeProduct^*}\\
    \cohom{M^{S\#T}} \ar[r]^-{\dprod_{S,T}} & \cohom{M^S\times M^T}
  }
\end{equation}
commute by the sign $(-1)^m$.\todo{commutes by the sign?}
Here, $\transposeProduct$ and $\transposeConnSum$ are defined as the transposition of $S$ and $T$.
Note that the associativity of the product holds even if the assumption $\dim\pi_*(M)\otimes\K < \infty$ is dropped.

Denote $\bar{m} = \dim\Omega^{k-1} M$.
Then \cref{item:assocCop} states that the diagrams
\begin{equation}
  \label{equation:assocCopDiagram}
  \xymatrix{
    \cohom{M^S\times M^T\times M^U} \ar[r]^-{\dcop_{S\amalg T, U}} \ar[d]^{\dcop_{S,T\amalg U}} &
    \cohom{M^S\times M^{T\#U}} \ar[d]^{\dcop_{S,T\#U}} \\
    \cohom{M^{S\#T}\times M^U} \ar[r]^-{\dcop_{S\#T,U}} & \cohom{M^{S\#T\#U}}
  }
\end{equation}
and
\begin{equation}
  \label{equation:commCopDiagram}
  \xymatrix{
    \cohom{M^{T\times S}} \ar[r]^-{\dcop_{T,S}} \ar[d]^{\transposeConnSum^*} & \cohom{M^T\#M^S} \ar[d]^{\transposeProduct^*}\\
    \cohom{M^{S\times T}} \ar[r]^-{\dcop_{S,T}} & \cohom{M^S\#M^T}
  }
\end{equation}
commute by the sign $(-1)^{\bar{m}}$.
Similarly, \cref{item:Frob} states that the diagram
\begin{equation}
  \label{equation:FrobDiagram}
  \xymatrix{
    \cohom{M^S\times M^{T\#U}} \ar[r]^-{\dcop_{S,T\#U}} \ar[d]^{\dprod_{S\#T,U}} &
    \cohom{M^{S\#T\#U}} \ar[d]^{\dprod_{S\amalg T,U}} \\
    \cohom{M^S\times M^T\times M^U} \ar[r]^-{\dcop_{S,T\amalg U}} & \cohom{M^{S\#T}\times M^U}
  }
\end{equation}
commutes by the sign $(-1)^{m\bar{m}}$.
\todo{the other diagram?}

\newcommand{\lift}[2]{#1_{#2}}

Before proving \cref{theorem:associativeFrobenius},
we give a notation $\lift{g}{\alpha}$ for a shriek map.

\begin{definition}
  Consider a pullback diagram
  \begin{equation*}
    \xymatrix{
      X \ar[r]^g \ar[d]^p & Y \ar[d]^q \\
      A \ar[r]^f & B
    }
  \end{equation*}
  of spaces, where $q$ is a fibration.
  Let $\alpha$ be an element of $\ext^m_{\cochain{B}}(\cochain{A}, \cochain{B})$.
  Assume that the Eilenberg-Moore map
  \begin{equation*}
    \EM\colon \tor^*_{\cochain{B}}(\cochain{A}, \cochain{Y})\xrightarrow{\cong}\cohom{X}
  \end{equation*}
  is an isomorphism (e.g., $B$ is 1-connected and the cohomology of the fiber is of finite type).
  Then we define $\lift{g}{\alpha}$ to be the composition
  \begin{equation*}
    \lift{g}{\alpha}\colon \cohom{X}
    \xleftarrow{\cong} \tor^*_{\cochain{B}}(\cochain{A}, \cochain{Y})
    \xrightarrow{\tor(\alpha, \id)} \tor^{*+m}_{\cochain{B}}(\cochain{B}, \cochain{Y})
    \xrightarrow{\cong} \cohom[*+m]{Y}
  \end{equation*}
\end{definition}

Using this notation,
we can write
the shriek map $\shriek\incl$ as $\lift{\incl}{\shriek\diag}$
for the diagram \cref{equation:STProdDiagram},
and the shriek map $\shriek\comp$ as $\lift{\comp}{\shriek\inclconst}$
for the diagram \cref{equation:STCopDiagram}.

Now we have the following two propositions
as a preparation of the proof of \cref{theorem:associativeFrobenius}.

\begin{proposition}
  \label{proposition:naturalityOfShriek}
  Consider a diagram
  \begin{equation*}
    \begin{tikzcd}[row sep=2.5em]
      X \arrow[rr,"g"] \arrow[dr,swap,"\varphi"] \arrow[dd,swap,""] &&
      Y \arrow[dd,swap,"q" near start] \arrow[dr,"\psi"] \\
      & X' \arrow[rr,crossing over,"g'" near start] &&
      Y' \arrow[dd,"q'"] \\
      A \arrow[rr,"" near end] \arrow[dr,swap,"a"] && B \arrow[dr,swap,"b"] \\
      & A' \arrow[rr,""] \arrow[uu,<-,crossing over,"" near end]&& B',
    \end{tikzcd}
  \end{equation*}
  where $q$ and $q'$ are fibrations
  and the front and back squares are pullback diagrams.
  Let $\alpha \in \ext^m_{\cochain{B}}(\cochain{A}, \cochain{B})$
  and $\alpha' \in \ext^m_{\cochain{B'}}(\cochain{A'}, \cochain{B'})$.
  Assume that the elements $\alpha$ and $\alpha'$
  are mapped to the same element in $\ext^m_{\cochain{B'}}(\cochain{A'}, \cochain{B})$
  by the morphisms induced by $a$ and $b$,
  and that the Eilenberg-Moore maps of two pullback diagrams are isomorphisms.
  Then the following diagram commutes.
  \begin{equation*}
    \xymatrix{
      \cohom{X'} \ar[r]^-{\lift{g'}{\alpha'}} \ar[d]^{\varphi^*} & \cohom[*+m]{Y'} \ar[d]^{\psi^*}\\
      \cohom{X} \ar[r]^{\lift{g}{\alpha}} & \cohom[*+m]{Y}
    }
  \end{equation*}
\end{proposition}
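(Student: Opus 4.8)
The plan is to break each shriek map into the three maps of its definition — the Eilenberg--Moore isomorphism, the contraction $\tor(\alpha,\id)$, and the canonical identification $\tor^{*+m}_{\cochain{B}}(\cochain{B},\cochain{Y})\cong\cohom[*+m]{Y}$ — and to show that the cube induces a commuting ladder of three squares on the level of $\tor$. Writing $\theta$ for the change-of-rings map $\tor^*_{\cochain{B'}}(\cochain{A'},\cochain{Y'})\to\tor^*_{\cochain{B}}(\cochain{A},\cochain{Y})$ induced by the compatible triple $(b^*,a^*,\psi^*)$, and $\theta_0$ for the analogous map with $\cochain{A'},\cochain{A}$ replaced by $\cochain{B'},\cochain{B}$, I would assemble the diagram whose top and bottom rows are $\lift{g'}{\alpha'}$ and $\lift{g}{\alpha}$ and whose four columns are $\varphi^*$, $\theta$, $\theta_0$, $\psi^*$. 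The assertion then follows from the commutativity of the three constituent squares, the outer rectangle being exactly $\psi^*\circ\lift{g'}{\alpha'}=\lift{g}{\alpha}\circ\varphi^*$.

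Two of the three squares are routine. The leftmost square, built from the two Eilenberg--Moore isomorphisms together with the columns $\varphi^*$ and $\theta$, commutes by the naturality of $\EM$ applied to the morphism of pullback squares $(\varphi,\psi,a,b)$ (cf.\ \cite{felix-halperin-thomas01}); here we use that both $\EM$ maps are isomorphisms, which holds by hypothesis. The rightmost square commutes because the identification $\tor^{*+m}_{\cochain{B}}(\cochain{B},\cochain{Y})\cong\cohom[*+m]{Y}$ turns $\theta_0$ into $\psi^*$, so this square is essentially tautological.

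The \emph{main obstacle} is the middle square
\[
  \xymatrix{
    \tor^*_{\cochain{B'}}(\cochain{A'},\cochain{Y'}) \ar[r]^-{\tor(\alpha',\id)} \ar[d]_{\theta} &
    \tor^{*+m}_{\cochain{B'}}(\cochain{B'},\cochain{Y'}) \ar[d]^{\theta_0} \\
    \tor^*_{\cochain{B}}(\cochain{A},\cochain{Y}) \ar[r]_-{\tor(\alpha,\id)} &
    \tor^{*+m}_{\cochain{B}}(\cochain{B},\cochain{Y}),
  }
\]
which relates the two contractions. I would verify it after choosing semifree resolutions $P'\xrightarrow{\qis}\cochain{A'}$ over $\cochain{B'}$ and $P\xrightarrow{\qis}\cochain{A}$ over $\cochain{B}$, together with a lift $\tilde a\colon P'\to P$ of $a^*$ along $b^*\colon\cochain{B'}\to\cochain{B}$, which exists by semifreeness of $P'$; then $\theta$ is computed by $\tilde a\otimes\psi^*$ and $\theta_0$ by $\psi^*$. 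Representing $\alpha,\alpha'$ by cocycles in $\hom_{\cochain{B}}(P,\cochain{B})$ and $\hom_{\cochain{B'}}(P',\cochain{B'})$ and chasing a class $[p'\otimes y']$, one route sends it to $b^*(\alpha'(p'))\cdot\psi^*(y')$ and the other to $\alpha(\tilde a(p'))\cdot\psi^*(y')$ in $\cochain{Y}$, using that $\psi^*$ is semilinear over $b^*$. The assumption that $\alpha$ and $\alpha'$ have the same image in $\ext^m_{\cochain{B'}}(\cochain{A'},\cochain{B})$ says precisely that $b^*\circ\alpha'$ and $\alpha\circ\tilde a$ are homotopic as $\cochain{B'}$-linear maps $P'\to\cochain{B}$; tensoring such a homotopy with $\psi^*$ yields a chain homotopy between the two composites $P'\otimes_{\cochain{B'}}\cochain{Y'}\to\cochain{Y}$, so they agree on homology. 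The bookkeeping to watch is the semilinearity of $a^*$ and $\psi^*$ over $b^*$ and the check that this homotopy survives $-\otimes_{\cochain{B'}}\cochain{Y'}$, which is where the real content of the statement lies.
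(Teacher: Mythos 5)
The paper gives no proof of this proposition — it only remarks that it ``can be proved by straightforward arguments'' — and your argument is a correct instantiation of the intended straightforward proof: you split each shriek map $\lift{g}{\alpha}$ into its three constituents (Eilenberg--Moore isomorphism, $\tor(\alpha,\id)$, canonical identification) and reduce the claim to the naturality of $\EM$ plus the middle Tor square. Your handling of that middle square is the only point with real content, and it is done correctly: lifting $a^*$ to a $b^*$-semilinear map $\tilde a\colon P'\to P$, interpreting the hypothesis on $\alpha,\alpha'$ as a chain homotopy between $b^*\circ\alpha'$ and $\alpha\circ\tilde a$ in $\hom_{\cochain{B'}}(P',\cochain{B})$, and checking that this homotopy descends through $-\otimes_{\cochain{B'}}\cochain{Y'}$ by the $b^*$-semilinearity of $\psi^*$.
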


\begin{proposition}
  \label{proposition:functorialityOfShriek}
  Consider a diagram
  \begin{equation*}
    \xymatrix{
      X \ar[r]^{\tilde{f}} \ar[d]^p & Y \ar[r]^{\tilde{g}} \ar[d]^q & Z \ar[d]^r \\
      A \ar[r]^f & B \ar[r]^g & C,
    }
  \end{equation*}
  where the two squares are pullback diagrams.
  Let $\alpha$ be an element of $\ext^m_{\cochain{B}}(\cochain{A}, \cochain{B})$
  and $\beta$ an element of $\ext^n_{\cochain{C}}(\cochain{B}, \cochain{C})$.
  Assume that the Eilenberg-Moore maps are isomorphisms for two pullback diagrams.
  Then we have
  \begin{equation*}
    \lift{(\tilde{g}\circ\tilde{f})}{\beta\circ (g_*\alpha)}
    = \lift{\tilde{g}}{\beta} \circ\lift{\tilde{f}}{\alpha},
  \end{equation*}
  where
  $g_*\colon \ext^m_{\cochain{B}}(\cochain{A}, \cochain{B}) \rightarrow \ext^m_{\cochain{C}}(\cochain{A}, \cochain{B})$
  is the morphism induced by the map $g\colon B\rightarrow C$.
\end{proposition}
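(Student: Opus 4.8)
The plan is to unwind the three shriek maps into their definitions as maps of $\tor$-groups and to reduce the claimed identity to the associativity (transitivity) of the derived tensor product together with the naturality of the Eilenberg--Moore isomorphism under a vertical stacking of pullback squares. Concretely, by definition $\lift{\tilde f}{\alpha}$ is the composite
\[
\cohom{X}\xleftarrow[\cong]{\EM}\tor^*_{\cochain{B}}(\cochain{A},\cochain{Y})\xrightarrow{\tor(\alpha,\id)}\tor^{*+m}_{\cochain{B}}(\cochain{B},\cochain{Y})\xrightarrow{\cong}\cohom[*+m]{Y},
\]
and $\lift{\tilde g}{\beta}$, $\lift{(\tilde g\circ\tilde f)}{\beta\circ(g_*\alpha)}$ are built in the same way from $\tor_{\cochain{C}}(\cochain{B},\cochain{Z})$ and $\tor_{\cochain{C}}(\cochain{A},\cochain{Z})$, the final isomorphism in each case being $\tor_R(R,N)\cong\cohom{N}$. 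Thus each shriek map is obtained by applying $-\otimes^{\mathbb L}_{R}\id$ to an Ext class regarded as a morphism in the derived category of $R$-modules ($R=\cochain{B}$ or $\cochain{C}$) and transporting along Eilenberg--Moore isomorphisms.

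The key geometric input is that the Eilenberg--Moore equivalence for the right-hand square is realized by a quasi-isomorphism of $\cochain{B}$-modules $\cochain{B}\otimes^{\mathbb L}_{\cochain{C}}\cochain{Z}\xrightarrow{\qis}\cochain{Y}$, so that $\cohom{Y}\cong\tor_{\cochain{C}}(\cochain{B},\cochain{Z})$ \emph{as a $\cochain{B}$-module}. Substituting this into the first factor above and invoking associativity of the derived tensor product gives the base-change chain
\[
\cohom{X}\cong\tor_{\cochain{B}}(\cochain{A},\cochain{Y})\cong\tor_{\cochain{B}}\bigl(\cochain{A},\cochain{B}\otimes^{\mathbb L}_{\cochain{C}}\cochain{Z}\bigr)\cong\tor_{\cochain{C}}(\cochain{A},\cochain{Z}).
\]
I would then verify that this composite is exactly the Eilenberg--Moore isomorphism $\cohom{X}\cong\tor_{\cochain{C}}(\cochain{A},\cochain{Z})$ for the outer composite square, which is the compatibility of $\EM$ with stacking of pullbacks. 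Under the same base change the map $\tor(\alpha,\id)$ over $\cochain{B}$ is carried to the map induced by $g_*\alpha$ over $\cochain{C}$: tensoring the $\cochain{B}$-linear map $\alpha$ over $\cochain{B}$ with $\cochain{B}\otimes^{\mathbb L}_{\cochain{C}}\cochain{Z}$ is, after transitivity, the same as tensoring $\alpha$ viewed as the $\cochain{C}$-linear map $g_*\alpha$ over $\cochain{C}$ with $\cochain{Z}$.

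Assembling these identifications, $\lift{\tilde g}{\beta}\circ\lift{\tilde f}{\alpha}$ becomes, on homology and up to the Eilenberg--Moore isomorphisms, the composite
\[
\cochain{A}\otimes^{\mathbb L}_{\cochain{C}}\cochain{Z}\xrightarrow{(g_*\alpha)\otimes\id}(\cochain{B}\otimes^{\mathbb L}_{\cochain{C}}\cochain{Z})[m]\xrightarrow{(\beta\otimes\id)[m]}\cochain{Z}[m+n].
\]
By functoriality of $-\otimes^{\mathbb L}_{\cochain{C}}\cochain{Z}$ this composite equals $(\beta\circ g_*\alpha)\otimes^{\mathbb L}_{\cochain{C}}\id$, which is precisely the middle map $\tor(\beta\circ g_*\alpha,\id)$ defining $\lift{(\tilde g\circ\tilde f)}{\beta\circ(g_*\alpha)}$; this yields the desired equality.

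The \emph{main obstacle} is making the Eilenberg--Moore compatibility rigorous at the chain level rather than in the derived category. I would realize $\cochain{Y}$ by a single explicit semifree model of the form $R\otimes_{\cochain{C}}\cochain{Z}$, where $R\xrightarrow{\qis}\cochain{B}$ is a semifree resolution over $\cochain{C}$, chosen so that the same data also resolve $\cochain{A}$ over $\cochain{B}$; one must check that the Eilenberg--Moore map is $\cochain{B}$-linear and that the two inner $\EM$ isomorphisms compose to the outer one compatibly with $\tor(\alpha,\id)$ and $\tor(\beta,\id)$. Carrying out all three $\tor$-computations on one coherent resolution --- for instance a semifree resolution of $\cochain{A}$ over $\cochain{C}$ factoring through one over $\cochain{B}$ --- is what reduces these compatibilities to routine diagram chases.
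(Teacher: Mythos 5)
Your proposal is correct and is precisely the ``straightforward argument'' the paper alludes to without writing down: the paper's entire proof of this proposition is the sentence ``These propositions can be proved by straightforward arguments.'' Your reduction to (i) transitivity of the derived tensor product / compatibility of the Eilenberg--Moore isomorphisms under stacking of pullback squares and (ii) base change carrying $\tor_{\cochain{B}}(\alpha,\id)$ to $\tor_{\cochain{C}}(g_*\alpha,\id)$, implemented on a single coherent semifree resolution of $\cochain{A}$ over $\cochain{C}$ factoring through one over $\cochain{B}$, is the intended argument and correctly isolates the one point requiring chain-level care.
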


These propositions can be proved by straightforward arguments.

\begin{proof}
  [Proof of \cref{theorem:associativeFrobenius}]
  First, we give a proof for \cref{item:Frob}.
  Note that the associativity in \cref{item:assocProd} and \cref{item:assocCop} can be proved similarly.

  Consider the following diagram.
  \begin{equation*}
    \xymatrix{
      \cohom{M^S\times M^{T\#U}} \ar[r]^-{\incl^*} \ar[d]^-{\comp^*} &
      \cohom{M^S\times_MM^{T\#U}} \ar[r]^-{\lift\comp{\shriek\inclconst}} \ar[d]^-{\comp^*} &
      \cohom{M^{S\#T\#U}} \ar[d]^-{\comp^*} \\
      \cohom{M^S\times M^T\times_MM^U} \ar[r]^-{\incl^*} \ar[d]^-{\lift\incl{\shriek\diag}} &
      \cohom{M^S\times_MM^T\times_MM^U} \ar[r]^-{\lift\comp{\shriek{(\inclconst\times\id)}}} \ar[d]^-{\lift\incl{\shriek{(\id\times\diag)}}} &
      \cohom{M^{S\#T}\times_MM^U} \ar[d]^-{\lift\incl{\shriek{(\id\times\diag)}}} \\
      \cohom{M^S\times M^T\times M^U} \ar[r]^-{\incl^*} &
      \cohom{M^S\times_MM^T\times M^U} \ar[r]^-{\lift\comp{\shriek{(\inclconst\times\id)}}} &
      \cohom{M^{S\#T}\times M^U}
    }
  \end{equation*}
  Note that the boundary of the whole square is the same as the diagram \cref{equation:FrobDiagram}.
  The upper left square is commutative by the functoriality of the cohomology and
  so are the upper right and lower left squares by \cref{proposition:naturalityOfShriek}.
  Next, we consider the lower right square.
  Applying \cref{proposition:functorialityOfShriek} to the diagram
  \begin{equation*}
    \xymatrix{
      M^S\times_MM^T\times_MM^U \ar[r]^-{\comp} \ar[d] & M^{S\#T}\times_MM^U \ar[r]^-{\incl} \ar[d] & M^{S\#T}\times M^U \ar[d]\\
      M\times M \ar[r]^-{\inclconst\times\id} & \spheresp[k-1]{M}\times M \ar[r]^-{\id\times\diag} & \spheresp[k-1]{M}\times M^2,
    }
  \end{equation*}
  we have
  \begin{equation*}
    \lift\incl{\shriek{(\id\times\diag)}}
    \circ \lift\comp{\shriek{(\inclconst\times\id)}}
    = \lift{(\incl\circ\comp)}
    {\shriek{(\id\times\diag)} \circ ((\id\times\diag)_*\shriek{(\inclconst\times\id)})}.
  \end{equation*}
  Using appropriate semifree resolutions,
  we have a representation
  \begin{align*}
    \shriek{(\id\times\diag)} \circ ((\id\times\diag)_*\shriek{(\inclconst\times\id)})
    & = [\id\otimes\shriekdiag] \circ [\shriekinclconst\otimes\id] \\
    & = [(-1)^{m\bar{m}}\shriekinclconst\otimes\shriekdiag]
  \end{align*}
  as a chain map.\todo{write more detailed proof}
  Here,
  $[\shriekdiag]=\shriek\diag\in\ext_{\cochain{M^2}}^m(\cochain{M}, \cochain{M^2})$
  and
  $[\shriekinclconst]=\shriek\inclconst\in\ext^{\bar{m}}_{\cochain{\spheresp[k-1]{M}}}(\cochain{M}, \cochain{\spheresp[k-1]{M}})$
  are representations as cochains.
  Similarly, we compute the other composition to be
  \begin{equation*}
    \lift\comp{\shriek{(\inclconst\times\id)}}
    \circ \lift\incl{\shriek{(\id\times\diag)}}
    = \lift{(\comp\circ\incl)}
    {\shriek{(\inclconst\times\id)} \circ ((\inclconst\times\id)_*)\shriek{(\id\times\diag)}}
  \end{equation*}
  with
  \begin{equation*}
    \shriek{(\inclconst\times\id)} \circ ((\inclconst\times\id)_*)\shriek{(\id\times\diag)}
    = [\shriekinclconst\otimes\shriekdiag].
  \end{equation*}
  This proves the commutativity by the sign $(-1)^{m\bar{m}}$ of the lower right square.

  Next, we prove the commutativity of the coproduct in \cref{item:assocCop}.
  This follows from the commutativity of the diagram
  \begin{equation}
    \label{equation:proofOfCommutativityDiagram}
    \xymatrix{
      \cohom{M^T\times M^S} \ar[r]^{\incl^*} \ar[d]^{\transposeProduct^*}
      & \cohom{M^T\times_MM^S} \ar[r]^-{\shriek{\comp}} \ar[d]^{\transposeProduct^*}
      & \cohom{M^{T\#S}} \ar[d]^{\transposeConnSum^*}\\
      \cohom{M^S\times M^T} \ar[r]^{\incl^*}
      & \cohom{M^S\times_MM^T} \ar[r]^-{\shriek{\comp}}
      & \cohom{M^{S\#T}}.
    }
  \end{equation}
  The commutativity of the left square is obvious.
  If one can apply \cref{proposition:naturalityOfShriek} to the diagram
  \cref{equation:cubeDiagramForCommutativity},
  we obtain the commutativity of the right square of \cref{equation:proofOfCommutativityDiagram}.
  \begin{equation}
    \label{equation:cubeDiagramForCommutativity}
    \begin{tikzcd}[row sep=2.5em]
      M^S\times_MM^T \arrow[rr,"\comp"] \arrow[dr,swap,"\transposeProduct"] \arrow[dd,swap,""] &&
      M^{S\#T} \arrow[dd,swap,"\res" near start] \arrow[dr,"\transposeConnSum"] \\
      & M^T\times_MM^S \arrow[rr,crossing over,"\comp" near start] &&
      M^{T\#S} \arrow[dd,"\res"] \\
      M \arrow[rr,"\inclconst" near end] \arrow[dr,swap,"\id"] && \spheresp[k-1]{M} \arrow[dr,swap,"\orirev"] \\
      & M \arrow[rr,"\inclconst"] \arrow[uu,<-,crossing over,"" near end]&& \spheresp[k-1]{M}
    \end{tikzcd}
  \end{equation}
  In order to apply \cref{proposition:naturalityOfShriek},
  it suffices to prove the equation
  \begin{equation}
    \label{equation:commutativityOfInclconstShriek}
    \ext_{\orirev^*}(\id,\orirev^*)(\shriek{\inclconst})=(-1)^{\bar{m}}\shriek{\inclconst}
  \end{equation}
  in $\ext_{\cochain{\spheresp[k-1]{M}}}(\cochain{M},\cochain{\spheresp[k-1]{M}})$.
  Since $\ext^{\bar{m}}_{\cochain{\spheresp[k-1]{M}}}(\cochain{M}, \cochain{\spheresp[k-1]{M}}) \cong \K$ and
  $\ext_{\orirev^*}(\id,\orirev^*) \circ \ext_{\orirev^*}(\id,\orirev^*) = \id$,
  we have \cref{equation:commutativityOfInclconstShriek} up to sign.
  In \cref{section:proofOfExtAlgebraic},
  we will determine the sign to be $(-1)^{\bar{m}}$.

  Similarly, in order to prove the commutativity of the product in \cref{item:assocProd},
  we need to prove the equation
  \begin{equation}
    \label{equation:commutativityOfDeltaShriek}
    \ext_{\orirev^*}(\id,\orirev^*)(\shriek{\diag})=(-1)^m\shriek{\diag}
  \end{equation}
  in $\ext_{\cochain{M^2}}(\cochain{M},\cochain{M^2})$.
  As above, we have \cref{equation:commutativityOfDeltaShriek} up to sign.
  The sign is determined to be $(-1)^m$ in \cref{section:determineSign}.

  The same proofs can be applied for $(S,i,j)$-brane product and coproduct.
\end{proof}

\section{Proof of \cref{equation:commutativityOfDeltaShriek}}
\label{section:determineSign}
\newcommand{\explShriek}{f}
In this section, we will prove \cref{equation:commutativityOfDeltaShriek}, determining the sign.
Here, we need the explicit description of $\shriek{\diag}$ in \cite{wakatsuki16}.

Let $M$ be a $1$-connected space with $\dim\pi_*(M)\otimes\K < \infty$.
By \cite[Theorem 1.6]{wakatsuki16}, we have a Sullivan model $(\wedge V, d)$ of $M$ which is semi-pure,
i.e., $d(I_V)\subset I_V$, where $I_V$ is the ideal generated by $V^{\mathrm{even}}$.
Let $\varepsilon\colon (\wedge V, d)\rightarrow \K$ be the augmentation map
and $\pr\colon (\wedge V, d)\rightarrow(\wedge V/I_V, d)$ the quotient map.
Take bases $V^{\mathrm{even}}=\K\{x_1,\dots x_p\}$ and $V^{\mathrm{odd}}=\K\{y_1,\dots y_q\}$.
Recall the relative Sullivan algebra $\mpath=(\wedge V\tpow2\otimes\wedge \susp{V}, d)$
over $(\wedge V, d)\tpow2$ from \cref{section:computeExample}.
Note that the relative Sullivan algebra $(\wedge V\tpow2\otimes\wedge \susp{V}, d)$ is a relative Sullivan model of
the multiplication map $(\wedge V, d)^{\otimes 2}\rightarrow (\wedge V, d)$,
Hence, using this as a semifree resolution, we have
$\ext_{\wedge V^\otimes 2}(\wedge V, \wedge V^{\otimes 2})
= \cohom{\hom_{\wedge V^\otimes 2}(\wedge V^{\otimes 2}\otimes\wedge\susp{V}, \wedge V^{\otimes 2})}$.
By \cite[Corollary 5.5]{wakatsuki16}, we have a cocycle
$\explShriek \in \hom_{\wedge V^\otimes 2}(\wedge V^{\otimes 2}\otimes\wedge\susp{V}, \wedge V^{\otimes 2})$
satisfying $\explShriek(\susp{x_1}\cdots\susp{x_p}) = \prod_{j=1}^{j=q}(1\otimes y_j - y_j\otimes 1) + u$
for some $u \in (y_1\otimes y_1, \ldots, y_q\otimes y_q)$.
Consider the evaluation map
\begin{align*}
  \ev\colon \ext_{\wedge V\tpow{2}}(\wedge V, \wedge V\tpow{2})
  \otimes \tor_{\wedge V\tpow{2}}(\wedge V, \wedge V / I_V)
  &\rightarrow \tor_{\wedge V\tpow{2}}(\wedge V\tpow{2}, \wedge V / I_V)\\
  &\xrightarrow{\cong} \cohom{\wedge V / I_V},
\end{align*}
where
$(\wedge V, d)\tpow{2}$, $(\wedge V, d)$, and $(\wedge V / I_V, d)$
are $(\wedge V, d)\tpow{2}$-module via
$\id$, $\varepsilon\cdot\id$, and $\pr\circ(\varepsilon\cdot\id)$, respectively.
Here, we use $(\wedge V\tpow2\otimes\wedge\susp{V},d)$ as a semifree resolution of $(\wedge V, d)$.
Then, we have
\begin{equation*}
  \ev([\explShriek]\otimes[\susp{x_1}\cdots\susp{x_p}]) = [y_1\cdots y_q] \neq 0,
\end{equation*}
and hence $[\explShriek]\neq 0$ in $\ext_{\wedge V\tpow{2}}(\wedge V, \wedge V\tpow{2})$.
\newcommand{\transposeSullivan}{t}
\newcommand{\transposeRelative}{\tilde{\transposeSullivan}}
Thus, it is enough to calculate $\ext_{\transposeSullivan}(\id,\transposeSullivan)([\explShriek])$
to determine the sign in \cref{equation:commutativityOfDeltaShriek},
where $\transposeSullivan\colon (\wedge V, d)\tpow2 \rightarrow (\wedge V, d)$
is the dga homomorphism defined by
$\transposeSullivan(v\otimes1)=1\otimes v$ and $\transposeSullivan(1\otimes v)=v\otimes 1$.

\begin{proof}
  [Proof of \cref{equation:commutativityOfDeltaShriek}]
  By definition, $\ext_{\transposeSullivan}(\id,\transposeSullivan)$ is induced by the map
  \begin{equation*}
    \hom_\transposeSullivan(\transposeRelative, \transposeSullivan)\colon
    \hom_{\wedge V\tpow2}(\wedge V\tpow2\otimes\wedge\susp{V}, \wedge V\tpow2)
    \rightarrow \hom_{\wedge V\tpow2}(\wedge V\tpow2\otimes\wedge\susp{V}, \wedge V\tpow2),
  \end{equation*}
  where $\transposeRelative$ is the dga automorphism defined by
  $\transposeRelative|_{\wedge V\tpow2}=\transposeSullivan$ and
  $\transposeRelative(\susp{v}) = -\susp{v}$.
  Since $\transposeRelative(\susp{x_1}\cdots\susp{x_p})=(-1)^p\susp{x_1}\cdots\susp{x_p}$
  and $\transposeSullivan(\prod_{j=1}^{j=q}(1\otimes y_j - y_j\otimes 1))=(-1)^q\prod_{j=1}^{j=q}(1\otimes y_j - y_j\otimes 1)$,
  we have
  \begin{equation*}
    \ev([\hom_\transposeSullivan(\transposeRelative, \transposeSullivan)(\explShriek)]
    \otimes[\susp{x_1}\cdots\susp{x_p}])
    = \ev([\transposeSullivan\circ\explShriek\circ\transposeRelative]
    \otimes[\susp{x_1}\cdots\susp{x_p}])
    = (-1)^{p+q}[y_1\cdots y_q].
  \end{equation*}
  Since the parity of $p+q$ is the same as that of the dimension of $(\wedge V, d)$ as a Gorenstein algebra,
  the sign in \cref{equation:commutativityOfDeltaShriek} is proved to be $(-1)^{m}$.
\end{proof}

\section{Proof of \cref{equation:commutativityOfInclconstShriek}}
\label{section:proofOfExtAlgebraic}
\newcommand{\resol}{\eta}
In this section, we give the proof of \cref{equation:commutativityOfInclconstShriek},
using the spectral sequence constructed in the proof of \cref{theorem:extAlgebraic}.
Although the key idea of the proof of \cref{theorem:extAlgebraic} is
the same as \cref{theorem:ExtDiagonal} due to F\'elix and Thomas,
we give the proof here for the convenience of the reader.
\todo{??}

\begin{proof}
  [Proof of \cref{theorem:extAlgebraic}]
  Take a $(A\otimes B, d)$-semifree resolution $\resol\colon(P,d)\xrightarrow{\qis}(A,d)$.
  Define $(C,d)=(\hom_{A\otimes B}(P,A\otimes B),d)$.
  Then $\ext_{A\otimes B}(A,A\otimes B) = \cohom{C,d}$.
  We fix a non-negative integer $N$, and define a complex
  $(C_N,d) = (\hom_{A\otimes B}(P,(A/A^{>n})\otimes B),d)$.
  We will compute the cohomology of $(C_N,d)$.
  Define a filtration $\{F^pC_N\}_{p\geq 0}$ on $(C_N,d)$ by
  $F^pC_N = \hom_{A\otimes B}(P,(A/A^{>n})^{\geq p}\otimes B)$.
  Then we obtain a spectral sequence $\{E^{p,q}_r\}_{r\geq 0}$
  converging to $\cohom{C_N, d}$.
  \begin{claim}
    \label{lemma:E2termOfSS}
    \begin{equation*}
      E^{p,q}_2 =
      \begin{cases}
        \cohom[p]{A/A^{>N}} & \mbox{(if $q=m$)}\\
        0 & \mbox{(if $q\neq m$)}
      \end{cases}
    \end{equation*}
  \end{claim}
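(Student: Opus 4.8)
The plan is to study the spectral sequence $\{E_r^{p,q}\}$ of the filtered complex $(C_N,d)$ by recognizing its first two differentials as the two pieces into which $d$ decomposes along the $A$-degree. Writing $d=\sum_{i\ge 0}d_i$ with $d_i$ raising the $A$-degree by $i$, the sub-dga structure makes $d_A$ (the internal differential of $A$) the degree-raising-by-one piece on $A$-generators, while \cref{item:assump1conn} forces every $B$-generator to satisfy $db-\bar d b\in A^{\ge 2}\otimes B$. Hence $d_0=\bar d$, the differential induced on $(B,\bar d)$, and the only contribution to $d_1$ is $d_A$, all other corrections landing in $d_{\ge 2}$. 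This clean separation of the $A$- and $B$-directions is exactly what the spectral sequence needs, and it is the conceptual heart of the argument.

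First I would compute $E_1$. On the associated graded of the $A$-degree filtration the $(A\otimes B)$-module $(A/A^{>N})^p\otimes B$ becomes the free $B$-module $V_p\otimes B$, where $V_p:=(A/A^{>N})^p$, because $A$ is connected and $A^+$ strictly raises the filtration. Since $(A,d)$ is of finite type, $V_p$ is finite dimensional and may be pulled out of the $\hom$; using the extension-of-scalars adjunction along $\pi\colon A\otimes B\to B$ (so that $d_0=\bar d$), I get
\[
  (E_0^{p,\bullet},d_0)\;\cong\;V_p\otimes\hom_B\bigl(B\otimes_{A\otimes B}P,\,B\bigr).
\]
Now $B\otimes_{A\otimes B}P\cong\K\otimes_A P$ is $B$-semifree, and since $P$ is $(A\otimes B)$-semifree while $(A\otimes B,d)$ is $A$-semifree, $P$ is an $A$-semifree resolution of $A$; therefore $H(\K\otimes_A P)=\tor_A(\K,A)=\K$, which for degree reasons is the trivial $B$-module. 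Thus $\K\otimes_A P$ is a $B$-semifree resolution of $\K$ and $H(E_0^{p,\bullet},d_0)=V_p\otimes\ext_B(\K,B)$. The Gorenstein hypothesis \cref{item:assumpGorenstein} then gives $\ext_B^q(\K,B)=\K$ for $q=\dimb$ and $0$ otherwise, so $E_1^{p,q}=(A/A^{>N})^p$ for $q=\dimb$ and $0$ for $q\neq\dimb$.

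Next I would run $d_1$. By the structural remark above, $d_1\colon E_1^{p,\dimb}\to E_1^{p+1,\dimb}$ is induced by $d_A$ alone, i.e.\ it is the differential of $A/A^{>N}$ tensored with the identity on the one-dimensional $\ext_B^{\dimb}(\K,B)$. Passing to cohomology yields $E_2^{p,q}=\cohom[p]{A/A^{>N}}$ for $q=\dimb$ and $0$ otherwise, which is the assertion of the claim (the index $m$ there being the Gorenstein dimension $\dimb$ of $B$).

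The hard part will be making the identifications of $d_0$ and $d_1$ fully rigorous: one must carry the two gradings simultaneously and check that the adjunction isomorphism is compatible with the differentials, sending $d_0$ to $\bar d$ and $d_1$ to $d_A$, and that \cref{item:assumpResId} together with the connectivity of $A$ and $B$ really does make the $B$-action on $H(\K\otimes_A P)$ trivial. The finite-type hypothesis on $A$ is what lets the finite-dimensional $V_p$ commute past $\hom_B(-,B)$, and truncating at $N$ keeps the filtration bounded so the spectral sequence converges; the passage $N\to\infty$ is deferred to the remainder of the proof of \cref{theorem:extAlgebraic}.
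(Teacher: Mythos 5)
Your proposal follows essentially the same route as the paper: filter by $A$-degree, identify the associated graded column as $(A/A^{>N})^p\otimes\hom_B(B\otimes_{A\otimes B}P,B)$, use that $B\otimes_{A\otimes B}P\cong\K\otimes_A P$ is a $B$-semifree resolution of $\K$ together with the Gorenstein hypothesis on $(B,\bar d)$ to get $E_1^{p,q}\cong A^p\otimes\ext_B^q(\K,B)$, and use assumption \cref{item:assump1conn} to identify $d_1$ with the differential of $A$. The argument is correct and the extra detail you give (the decomposition of $d$ by $A$-degree and the identification $H(\K\otimes_A P)\cong\tor_A(\K,A)\cong\K$) only makes explicit what the paper leaves implicit.
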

  \begin{proof}
    [Proof of \cref{lemma:E2termOfSS}]
    We may assume $p\leq N$.
    Then we have an isomorphism of complexes
    \begin{equation*}
      (A^{\geq p}/A^{\geq p+1},0)\otimes (\hom_B(B\otimes_{A\otimes B}P,B),d)
      \xrightarrow{\cong} (E^p_0, d_0),
    \end{equation*}
    hence
    \begin{equation*}
      (A^{\geq p}/A^{\geq p+1})\otimes \cohom{\hom_B(B\otimes_{A\otimes B}P,B),d}
      \xrightarrow{\cong} E^p_1.
    \end{equation*}
    Define
    \begin{equation*}
      \bar{\resol}\colon (B,\bar{d})\otimes_{A\otimes B}(P,d)
      \xrightarrow{1\otimes\resol}(B,\bar{d})\otimes_{A\otimes B}(A,d)
      \cong \K.
    \end{equation*}
    Note that the last isomorphism follows from the assumption \cref{item:assumpResId}.
    Then, since $\resol$ is a quasi-isomorphism, so is $\bar{\resol}$.
    Hence we have
    \begin{equation*}
      \cohom[q]{\hom_B(B\otimes_{A\otimes B}P,B),d}
      \cong \ext^q_B(\K, B) \cong
      \begin{cases}
        \K & \mbox{(if $q = m$)}\\
        0 & \mbox{(if $q\neq m$)}
      \end{cases}
    \end{equation*}
    by the assumption \cref{item:assumpGorenstein}.

    Hence we have
    \begin{align*}
      E^{p,q}_1 &\cong (A^{\geq p}/A^{\geq p+1}) \otimes \cohom[q]{\hom_B(B\otimes_{A\otimes B}P, B),d}\\
                &\cong A^p\otimes\ext^q_B(\K, B).
    \end{align*}
    Moreover, using the assumption \cref{item:assump1conn} and the above isomorphisms,
    we can compute the differential $d_1$ and have an isomorphism of complexes
    \begin{equation}
      \label{equation:computationOfE1}
      (E^{*,q}_1,d_1) \cong (A^*,d) \otimes \ext^q_B(\K, B).
    \end{equation}
    This proves \cref{lemma:E2termOfSS}.
  \end{proof}

  Now we return to the proof of \cref{theorem:extAlgebraic}.
  We will recover $\cohom{C}$ from $\cohom{C_N}$ taking a limit.
  Since $\limone_NC_N=0$, we have an exact sequence
  \begin{equation*}
    0\rightarrow \limone_N\cohom{C_N}
    \rightarrow \cohom{\lim_NC_N}
    \rightarrow \cohom{\lim_N\cohom{C_N}}
    \rightarrow 0.
  \end{equation*}
  By \cref{lemma:E2termOfSS},
  the sequence $\{\cohom{C_N}\}_N$ satisfies the (degree-wise) Mittag-Leffler condition,
  and hence $\limone_N\cohom{C_N}=0$.
  Thus, we have
  \begin{equation*}
    \cohom[l]{C}
    \cong \cohom[l]{\lim_NC_N}
    \cong \lim_N\cohom[l]{C_N}
    \cong \cohom[l-m]{A}.
  \end{equation*}
  This proves \cref{theorem:extAlgebraic}.
\end{proof}

\newcommand{\orirevSullivan}{t}
\newcommand{\orirevRelative}{{\tilde{\orirevSullivan}}}
\newcommand{\orirevFiber}{{\bar{\orirevSullivan}}}
\newcommand{\orirevFiberResol}{{\hat{\orirevSullivan}}}
Next, using the above spectral sequence,
we determine the sign in \cref{equation:commutativityOfInclconstShriek}.

\begin{proof}
  [Proof of \cref{equation:commutativityOfInclconstShriek}]
  If $k=1$, \cref{equation:commutativityOfInclconstShriek} is the same as \cref{equation:commutativityOfDeltaShriek},
  which was proved in \cref{section:determineSign}.
  Hence we assume $k\geq 2$.
  As in \cref{section:proofOfAssocAndFrob},
  let $M$ be a $k$-connected $\K$-Gorenstein space of finite type with $\dim\pi_*(M)\otimes\K < \infty$,
  and $(\wedge V, d)$ its minimal Sullivan model.
  Using the Sullivan models constructed in \cref{section:constructModel},
  we have that the automorphism $\ext_{\orirev^*}(\id,\orirev^*)$
  on $\ext_{\cochain{\spheresp[k-1]{M}}}(\cochain{M},\cochain{\spheresp[k-1]{M}})$
  is induced by the automorphism $\hom_\orirevSullivan(\orirevRelative, \orirevSullivan)$
  on
  $\hom_{\wedge V\otimes\wedge\susp[k-1]{V}}
  (\wedge V\otimes\wedge\susp[k-1]{V}\otimes\wedge\susp[k]{V}, \wedge V\otimes\wedge\susp[k-1]{V})$,
  where $\orirevSullivan$ and $\orirevRelative$ are the dga automorphisms
  on $(\wedge V\otimes\wedge\susp[k-1]{V}, d)$
  and $(\wedge V\otimes\wedge\susp[k-1]{V}\otimes\wedge\susp[k]{V}, d)$, respectively,
  defined by
  \begin{align*}
    &\orirevSullivan(v) = v,\ \orirevSullivan(\susp[k-1]{v}) = -\susp[k-1]{v},\\
    &\orirevRelative(v) = v,\ \orirevRelative(\susp[k-1]{v}) = -\susp[k-1]{v},\ \mathrm{and}\ %
      \orirevRelative(\susp[k]{v}) = -\susp[k]{v}.
  \end{align*}

  Now, consider the spectral sequence $\{E^{p,q}_r\}$ in the proof of \cref{theorem:extAlgebraic}
  by taking $(A\otimes B, d) = (\wedge V \otimes \wedge \susp[k-1]{V}, d)$
  and $(P,d) = (\wedge V\otimes\wedge\susp[k-1]{V}\otimes\wedge\susp[k]{V}, d)$.
  Since $k\geq 2$,
  $\hom_\orirevSullivan(\orirevRelative, \orirevSullivan)$ induces
  automorphisms on the complexes $C_N$ and $F^pC_N$,
  and hence on the spectral sequence $\{E^{p,q}_r\}$.
  By the isomorphism \cref{equation:computationOfE1},
  we have
  \begin{equation*}
    E^{p,q}_2\cong \cohom[p]{A}\otimes\ext^q_{\wedge \susp[k-1]{V}}(\K, \wedge \susp[k-1]{V}),
  \end{equation*}
  and that the automorphism induced on $E_2$ is the same as $\id\otimes\ext_\orirevFiber(\id, \orirevFiber)$,
  where $\orirevFiber$ is defined by $\orirevFiber(\susp[k-1]{v})=-\susp[k-1]{v}$ for $v\in V$.
  Since the differential is zero on $\wedge \susp[k-1]{V}$,
  we have an isomorphism
  \begin{equation*}
    \ext^*_{\wedge \susp[k-1]{V}}(\K, \wedge \susp[k-1]{V})
    \cong \bigotimes_i\ext^*_{\wedge \susp[k-1]{v_i}}(\K, \wedge \susp[k-1]{v_i})
  \end{equation*}
  where $\{v_1,\ldots,v_l\}$ is a basis of $V$.
  Using this isomorphism, we can identify
  \begin{equation*}
    \ext_\orirevFiber(\id, \orirevFiber) = \bigotimes_i\ext_{\orirevFiber_i}(\id, \orirevFiber_i),
  \end{equation*}
  where $\orirevFiber_i$ is defined by $\orirevFiber_i(\susp[k-1]{v_i})=-\susp[k-1]{v_i}$.

  Since $(-1)^{\dim V}=(-1)^{\bar{m}}$,
  it suffices to show $\ext_{\orirevFiber_i}(\id, \orirevFiber_i)=-1$.
  Taking a resolution, we have
  \begin{align*}
    &\ext^*_{\wedge \susp[k-1]{v_i}}(\K, \wedge \susp[k-1]{v_i})
      = \cohom{\hom_{\wedge \susp[k-1]{v_i}}
      (\wedge \susp[k-1]{v_i}\otimes\wedge\susp[k]{v_i}, \wedge \susp[k-1]{v_i})}\\
    &\ext_{\orirevFiber_i}(\id, \orirevFiber_i)=\cohom{\hom_{\orirevFiber_i}(\orirevFiberResol_i, \orirevFiber_i)},
  \end{align*}
  where the differential $d$ on $\wedge \susp[k-1]{v_i}\otimes\wedge\susp[k]{v_i}$
  is defined by $d(\susp[k-1]{v_i})=0$ and $d(\susp[k]{v_i})=\susp[k-1]{v_i}$,
  and the dga homomorphism $\orirevFiberResol_i$ is defined by
  $\orirevFiberResol_i(\susp[k-1]{v_i})=-\susp[k-1]{v_i}$ and $\orirevFiberResol_i(\susp[k]{v_i})=-\susp[k]{v_i}$.
  \newcommand{\extgen}{f}
  Using this resolution, we have the generator $[\extgen]$ of
  $\cohom{\hom_{\wedge \susp[k-1]{v_i}}(\wedge \susp[k-1]{v_i}\otimes\wedge\susp[k]{v_i}, \wedge \susp[k-1]{v_i})}\cong\K$
  as follows:
  \begin{itemize}
    \item If $\deg{\susp[k-1]{v_i}}$ is odd,
      define $\extgen$ by $\extgen(1)=\susp[k-1]{v_i}$ and $\extgen((\susp[k]{v_i})^l)=0$ for $l\geq 1$.
    \item If $\deg{\susp[k-1]{v_i}}$ is even,
      define $\extgen$ by $\extgen(1)=0$ and $\extgen((\susp[k]{v_i}))=1$.
  \end{itemize}
  In both cases, we have
  $\hom_{\orirevFiber_i}(\orirevFiberResol_i, \orirevFiber_i)(\extgen)
  =\orirevFiber_i\circ\extgen\circ\orirevFiberResol_i
  =-\extgen$.
  This proves $\ext_{\orirevFiber_i}(\id, \orirevFiber_i)=-1$
  and completes the determination of the sign in \cref{equation:commutativityOfInclconstShriek}.
\end{proof}

\section*{Acknowledgment}
I would like to express my gratitude to Katsuhiko Kuribayashi and Takahito Naito for productive discussions and valuable suggestions.
Furthermore, I would like to thank my supervisor Nariya Kawazumi for the enormous support and comments.
This work was supported by JSPS KAKENHI Grant Number 16J06349 and the Program for Leading Graduate School, MEXT, Japan.


\end{document}